%
%
%
%
\documentclass[11pt]{amsart} 
\usepackage{amscd}
\usepackage{amsmath,empheq}
\usepackage{amsfonts}
\usepackage{amssymb}
\usepackage{mathrsfs}
\usepackage[all]{xy}
\textwidth=6in \textheight=8.9in \topmargin=-0.5cm
\oddsidemargin=0.5cm \evensidemargin=0.5cm
\newtheorem{thm}{Theorem}[section]

\newtheorem{prop}[thm]{Proposition}

\theoremstyle{definition}

\theoremstyle{remark}
\newtheorem{rem}[thm]{Remark}
\newtheorem{example}[thm]{Example}
\def\divv{\textrm{div}}
\def\RR{\mathbb{R}}
\def\NN{\mathbb{N}}
\def\qed{\hfill $\Box$\par\vskip3mm}
\newenvironment{proof-sketch}{\noindent{\bf Sketch of Proof}\hspace*{1em}}{\qed\bigskip}

\everymath{\displaystyle}
\baselineskip=16pt plus 1pt minus 1pt
\begin{document}
\title{Anisotropic equations with indefinite
 potential \\ and competing nonlinearities }
\author[N.S. Papageorgiou]{Nikolaos S. Papageorgiou}
\address[N.S. Papageorgiou]{Department of Mathematics,
National Technical University, 
				Zografou Campus, 15780 Athens, Greece \& Institute of Mathematics, Physics and Mechanics, 1000 Ljubljana, Slovenia}
\email{\tt npapg@math.ntua.gr}
\author[V.D. R\u{a}dulescu]{Vicen\c{t}iu D. R\u{a}dulescu}
\address[V.D. R\u{a}dulescu]{Faculty of Applied Mathematics, AGH University of Science and Technology, al. Mickiewicza 30, 30-059 Krak\'ow, Poland \& Department of Mathematics, University of Craiova, 200585 Craiova, Romania
\& Institute of Mathematics, Physics and Mechanics, 1000 Ljubljana, Slovenia}
\email{\tt radulescu@inf.ucv.ro}
\author[D.D. Repov\v{s}]{Du\v{s}an D. Repov\v{s}}
\address[D.D. Repov\v{s}]{Faculty of Education and Faculty of Mathematics and Physics, University of Ljubljana
\& Institute of Mathematics, Physics and Mechanics, 1000 Ljubljana, Slovenia}
\email{\tt dusan.repovs@guest.arnes.si}
\keywords{Variable exponent spaces, regularity theory, maximum principle, concave and convex nonlinearities, positive solutions, comparison principles.\\
\phantom{aa} 2010 American Mathematical Society Subject Classification: 35J10, 35J70}
\begin{abstract}
We consider a nonlinear Dirichlet problem driven by a variable exponent $p$-Laplacian plus an indefinite potential term. The reaction has the competing effects of a parametric concave (sublinear) term and of a convex (superlinear) perturbation (an anisotropic concave-convex problem). We prove a bifurcation-type theorem describing the changes in the set of positive solutions  as the positive parameter $\lambda$ varies. Also, we prove the existence of minimal positive solutions.
\end{abstract}
\maketitle

 \section{Introduction}

Let $\Omega\subseteq\RR^N$ be a bounded domain with a $C^2$-boundary $\partial\Omega$. In this paper we study the following anisotropic boundary value problem
\begin{equation}\tag{\mbox{$P_\lambda$}}
\left\{
\begin{array}{lll}
-\Delta_{p(z)}u(z)+\xi(z)u(z)^{p(z)-1}=\lambda u(z)^{q(z)-1}+f(z,u(z)) \text{ in } \Omega,\\
u|_{\partial\Omega}=0,\;\lambda>0,u>0.
\end{array}
\right.
\end{equation}

In this problem, $\Delta_{p(z)}$ denotes the $p(z)$-Laplacian defined by
$$
\Delta_{p(z)}u={\rm div}\,(|Du|^{p(z)-2}Du(z)) \mbox{ for all }u\in W^{1,p(z)}_0(\Omega).
$$

Concerning the exponents $p,q:\overline{\Omega}\to\RR$, we assume that both are functions belonging to $C^1(\overline{\Omega})$ and we have
$$
1<q_-\leq q(z)\leq q_+<p_-\leq p(z)\leq p_+ \mbox{ for all }z\in\overline{\Omega}.
$$

The potential function $\xi\in L^\infty(\Omega)$ is sign-changing. So, the differential operator of $(P_\lambda)$ (left-hand side) is not coercive. In the reaction (right-hand side of $(P_\lambda)$), we have a parametric term with $\lambda>0$ being the parameter and a perturbation $f(z,x)$ which is jointly measurable and of class $C^1$ in the $x$-variable. We assume that $f(z,\cdot)$ exhibits $(p_+-1)$-superlinear growth near $+\infty$ without satisfying the usual in such cases Ambrosetti-Rabinowitz condition ($AR$-condition for short). So, in the reaction of problem $(P_\lambda)$ we have the competing effects of a sublinear (concave) term and of a superlinear (convex) term. We are looking for positive solutions and our aim is to have a precise description of the changes in the set of positive solutions as the parameter $\lambda>0$ varies (a bifurcation-type result).

The study of such parametric concave-convex problems started with the seminal paper of Ambrosetti, Brezis and Cerami \cite{1Amb-Bre-Cer}, where $p(z)=2$ for all $z\in\overline{\Omega}$ (semilinear isotropic problem). It was extended to equations driven by the $p$-Laplacian and with the reaction being $\lambda x^{q-1}+x^{r-1}$ for all $x\geq0$ with $1<q<p<r<p^*$
 by Garcia Azorero, Manfredi and Peral Alonso \cite{8Gar-Man-Per}, and Guo and Zhang \cite{11Guo-Zha}.
 Recall that $$p^*=\left\{
                                  \begin{array}{ll}
                                    \frac{Np}{N-p}, & \hbox{ if } p\leq N \\
                                    +\infty, & \hbox{ if } N<p.
                                  \end{array}
                                \right.$$
 Further extensions can be found in the works of Marano and Papageorgiou \cite{14Mar-Pap} and Papageorgiou and R\u adulescu \cite{15Pap-Rad}. All the aforementioned works deal with  isotropic equations. To the best of our knowledge, no such results exist for anisotropic equations.

Additional parametric boundary value problems driven by operators with variable exponents and applications, can be found in the book of R\u adulescu and Repov\v s \cite{18Rad-Rep}. We also refer to the recent papers \cite{BRR1,BRR2,Cherfils,PRR0,PRR1,PRR2,PRR2bis,PRR3,ZRJMPA}, all dealing with isotropic or anisotropic nonlinear problems with Dirichlet boundary condition.

\section{Mathematical background, auxiliary results and hypotheses}

In this section we briefly review some basic facts about variable exponent spaces and we prove two anisotropic strong comparison theorems which we will need in our analysis of problem $(P_\lambda)$.

A comprehensive presentation of variable exponent Lebesgue and Sobolev spaces can be found in the book of Diening, Harjulehto, H\"asto and Ruzi\v cka \cite{4Die-Har-Has-Ruz}.

So, let $L_1^\infty(\Omega)=\{p\in L^\infty(\Omega):\: {\rm essinf}_\Omega\, p\geq 1\}$.
For $p\in L_1^\infty (\Omega)$, we set
$$p_-={\rm essinf}_\Omega\, p\quad\mbox{and}\quad p_+={\rm esssup}_\Omega\, p.$$

Also let $M(\Omega)=\{u:\Omega\to\RR:\: u(\cdot)\ \mbox{is measurable}\}$.
As usual, we identify two such functions which differ on a set of zero measure.

Given $p\in L_1^\infty(\Omega)$, we define the following variable exponent Lebesgue space
$$
L^{p(z)}(\Omega)=\left\{u\in M(\Omega):\: \int_\Omega |u|^{p(z)}dz<+\infty\right\}.
$$

We equip $L^{p(z)}(\Omega)$ with the following norm (known as the Luxemburg norm)
$$
\|u\|_{p(z)}=\inf\left\{\lambda>0:\:\int_\Omega \left(\frac{|u|}{\lambda}\right)^{p(z)}dz\leq1\right\}.
$$

Having defined variable exponent Lebesgue spaces, we can introduce variable exponent Sobolev spaces by
$$
W^{1,p(z)}(\Omega)=\{u\in L^{p(z)}(\Omega):\: |Du|\in L^{p(z)}(\Omega)\}.
$$

We equip this space with the following norm
$$
\|u\|_{1,p(z)}=\|u\|_{p(z)}+\|Du\|_{p(z)}.
$$

An equivalent norm of $W^{1,p(z)}(\Omega)$ is given by
$$
\|u\|'_{1,p(z)}=\inf\left\{\lambda>0:\:\int_\Omega \left(\left(\frac{|Du|}{\lambda}\right)^{p(z)}+\left(\frac{|u|}{\lambda}\right)^{p(z)}\right)dz\leq1
\right\}.
$$

We define $W^{1,p(z)}_0(\Omega)$ as the closure in the $\|\cdot\|_{1,p(z)}$ of all compactly supported $W^{1,p(z)}(\Omega)$-functions.

When $p\in L_1^\infty(\Omega)$ and $p_->1$, then the spaces $L^{p(z)}(\Omega)$, $W^{1,p(z)}(\Omega)$ and $W^{1,p(z)}_0(\Omega)$ are all separable, reflexive and uniformly convex.

We set
$$
p^*(z)=\left\{
         \begin{array}{ll}
           \frac{Np(z)}{N-p(z)}, & \hbox{ if } p(z)< N \\
           +\infty, & \hbox{ if }p(z)\geq N.
         \end{array}
       \right.
$$

If $p,q\in C(\overline{\Omega})$, $p_+<N$ and $1\leq q(z)\leq p^*(z)$ (resp. $1\leq q(z)<p^*(z)$) for all $z\in\overline{\Omega}$, then $W^{1,p(z)}(\Omega)$ and $W^{1,p(z)}_0(\Omega)$ are embedded continuously (resp. compactly) into $L^{q(z)}(\Omega)$.

If $p,p'\in L_1^\infty(\Omega)$ and $\frac{1}{p(z)}+\frac{1}{p'(z)}=1$, then $L^{p(z)}(\Omega)^*=L^{p'(z)}(\Omega)$ and we have the following H\"older type inequality
$$
\int_\Omega |uv|dz\leq\left(\frac{1}{p_-}+\frac{1}{p'_-}\right)\|u\|_{p(z)}\|v\|_{p'(z)} \mbox{ for all }u\in L^{p(z)}(\Omega),\:v\in L^{p'(z)}(\Omega).
$$

We say that  $p\in C(\overline{\Omega})$ is logarithmic H\"older continuous (denoted by $p\in C^{0,\frac{1}{|\ln t|}}$) if it satisfies
$$
\left|p(z)-p(z')\right|\leq \frac{c}{|\ln|z-z'||} \mbox{ for some } c>0, \mbox{ all }z,z'\in\Omega, |z-z'|\leq\frac{1}{2}.
$$

Note that $C^{0,1}(\overline{\Omega})\hookrightarrow C^{0,\frac{1}{|\ln t|}}(\overline{\Omega})$. Also, when $p\in C^{0,\frac{1}{|\ln t|}}(\overline{\Omega})$, then
$$
W^{1,p(z)}_0(\Omega)=\overline{C_c^\infty(\overline{\Omega})}^{\|\cdot\|_{1,p(z)}}.
$$

Moreover, in this case the Poincar\'e inequality holds and we have
$$
\|u\|_{p(z)}\leq\hat{C}\|Du\|_{p(z)} \mbox{ for all }u\in W^{1,p(z)}_0(\Omega),
$$
where $\hat{C}>0$ depends only on $(p,N,|\Omega|_N,{\rm diam}\,\Omega)$, with $|\cdot|_N$ denoting the Lebesgue measure on $\RR^N$. So, when $p\in C^{0,\frac{1}{|\ln t|}}(\overline{\Omega})$, then on the Sobolev space $W^{1,p(z)}_0(\Omega)$, we can use the equivalent norm
$$
\|u\|=\|Du\|_{p(z)} \mbox{ for all }u\in W^{1,p(z)}_0(\Omega).
$$

We introduce the following modular functions
\begin{eqnarray*} 
  && \rho(u)=\int_\Omega |u|^{p(z)}dz \mbox{ for all }u\in L^{p(z)}(\Omega), \\
  && \hat{\rho}(Du)=\int_\Omega |Du|^{p(z)}dz \mbox{ for all }u\in W^{1,p(z)}_0(\Omega).
\end{eqnarray*}

We have the following property.
\begin{prop}\label{prop1}
 (a) For $u\in L^{p(z)}(\Omega)$, $u\not=0$, we have
$$
\|u\|_{p(z)}=\lambda \Leftrightarrow \rho\left(\frac{u}{\lambda}\right)=1;
$$

   (b) $\|u\|_{p(z)}<1$  (resp. $=1$, $>1$)  $\Leftrightarrow$ $\rho(u)<1$ (resp. $=1$, $>1$);

    (c) $\|u\|_{p(z)}<1$ $\Rightarrow$ $\|u\|_{p(z)}^{p_+}\leq \rho(u)\leq\|u\|_{p(z)}^{p_-}$  and
               $\|u\|_{p(z)}>1$ $\Rightarrow$ $\|u\|_{p(z)}^{p_-}\leq \rho(u)\leq\|u\|_{p(z)}^{p_+}$;

    (d) $\|u_n\|_{p(z)}\to0$ $\Leftrightarrow$ $\rho(u_n)\to0$;

    (e) $\|u_n\|_{p(z)}\to+\infty$ $\Leftrightarrow$ $\rho(u_n)\to+\infty$.
\end{prop}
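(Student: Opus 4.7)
The plan is to prove (a) first as the keystone, deduce (b) as an easy corollary, handle (c) by a direct rescaling argument, and then obtain (d) and (e) by combining (b) and (c).

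For (a), I would fix $u\neq 0$ and study the map $\Phi(\lambda)=\rho(u/\lambda)$ on $(0,+\infty)$. Since $p(z)\geq p_->1$, $\Phi$ is strictly decreasing in $\lambda$; since $|u(z)/\lambda|^{p(z)}\leq |u(z)/\lambda_0|^{p(z)}\in L^1(\Omega)$ uniformly for $\lambda\geq\lambda_0>0$, the dominated convergence theorem gives that $\Phi$ is continuous on $(0,+\infty)$, with $\Phi(\lambda)\to 0$ as $\lambda\to+\infty$ and $\Phi(\lambda)\to+\infty$ as $\lambda\to 0^+$. Writing $\lambda_0=\|u\|_{p(z)}$, pick any sequence $\lambda_n\downarrow\lambda_0$ with $\Phi(\lambda_n)\leq 1$; continuity yields $\Phi(\lambda_0)\leq 1$. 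If $\Phi(\lambda_0)<1$, continuity would let us take $\lambda<\lambda_0$ with $\Phi(\lambda)\leq 1$, contradicting the infimum. Hence $\Phi(\lambda_0)=1$, and uniqueness of such $\lambda_0$ (from strict monotonicity) gives the biconditional.

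Part (b) is then immediate: by (a) $\rho(u/\|u\|_{p(z)})=1$, and strict monotonicity of $\Phi$ shows that $\|u\|_{p(z)}<1$ (resp.\ $=1$, $>1$) forces $\rho(u)=\Phi(1)<1$ (resp.\ $=1$, $>1$). For (c), assume $\|u\|_{p(z)}<1$ and put $v=u/\|u\|_{p(z)}$, so that $\rho(v)=1$ by (a). Since $\|u\|_{p(z)}<1$ and $p_-\leq p(z)\leq p_+$, one has pointwise
$$
\|u\|_{p(z)}^{p_+}|v(z)|^{p(z)}\leq \|u\|_{p(z)}^{p(z)}|v(z)|^{p(z)}=|u(z)|^{p(z)}\leq\|u\|_{p(z)}^{p_-}|v(z)|^{p(z)}.
$$
Integrating and using $\rho(v)=1$ yields $\|u\|_{p(z)}^{p_+}\leq\rho(u)\leq\|u\|_{p(z)}^{p_-}$. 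The case $\|u\|_{p(z)}>1$ is handled by reversing the inequalities on the powers of $\|u\|_{p(z)}$.

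Parts (d) and (e) follow by combining (b) and (c). For (d), if $\|u_n\|_{p(z)}\to 0$, eventually $\|u_n\|_{p(z)}<1$ and by (c) $\rho(u_n)\leq \|u_n\|_{p(z)}^{p_-}\to 0$; conversely, if $\rho(u_n)\to 0$, then by (b) eventually $\|u_n\|_{p(z)}<1$ and by (c) $\|u_n\|_{p(z)}^{p_+}\leq\rho(u_n)\to 0$, hence $\|u_n\|_{p(z)}\to 0$ (using $p_+<\infty$). Part (e) is symmetric, using the "$>1$" branches. The only delicate step in the whole argument is the continuity-plus-attainment in (a); once that is secured, everything else is bookkeeping with the two-sided power bound on $\|u\|_{p(z)}^{p(z)}$.
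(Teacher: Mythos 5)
Your proof is correct. The paper does not prove Proposition \ref{prop1} at all -- it is quoted as standard background on the Luxemburg norm and the modular $\rho$ (see Diening--Harjulehto--H\"ast\"o--Ru\v{z}i\v{c}ka) -- and your argument (continuity and strict monotonicity of $\lambda\mapsto\rho(u/\lambda)$ to get attainment of the infimum in (a), then the two-sided pointwise bound $t^{p_+}\leq t^{p(z)}\leq t^{p_-}$ for $t<1$ and its reversal for $t>1$) is exactly the standard derivation of the norm--modular relations.
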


Similarly, we have the following implications, when $p\in C^{0,\frac{1}{|\ln t|}}(\overline{\Omega})$.

\begin{prop}\label{prop2}
 (a) For $u\in W^{1,p(z)}_0(\Omega)$, $u\not=0$, we have
$$
\|u\|=\lambda \Leftrightarrow \hat{\rho}\left(\frac{Du}{\lambda}\right)=1;
$$

    (b) $\|u\|<1$ (resp. $=1$, $>1$) $\Leftrightarrow$ $\hat{\rho}(Du)<1$ (resp. $=1$, $>1$);

    (c) $\|u\|<1$ $\Rightarrow$ $\|u\|^{p_+}\leq \hat{\rho}(Du)\leq \|u\|^{p_-}$ and
                $\|u\|>1$ $\Rightarrow$ $\|u\|^{p_-}\leq \hat{\rho}(Du)\leq\|u\|^{p_+}$;

    (d) $\|u_n\|\to 0$ $\Leftrightarrow$ $\hat{\rho}(Du_n)\to0$;
    (e) $\|u_n\|\to+\infty$ $\Leftrightarrow$ $\hat{\rho}(Du_n)\to+\infty$.
\end{prop}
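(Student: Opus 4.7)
\textbf{Proof plan for Proposition \ref{prop2}.} The plan is to deduce every assertion directly from the corresponding assertion of Proposition \ref{prop1} by substituting $u \mapsto |Du|$. First, since $p \in C^{0,\frac{1}{|\ln t|}}(\overline{\Omega})$, the Poincar\'e-type inequality recalled just above guarantees that $\|u\| := \|Du\|_{p(z)}$ is an equivalent norm on $W^{1,p(z)}_0(\Omega)$; in particular, for $u \in W^{1,p(z)}_0(\Omega)$ one has $u = 0$ iff $Du \equiv 0$ a.e. on $\Omega$. Consequently, the map $u \mapsto |Du|$ sends nonzero elements of $W^{1,p(z)}_0(\Omega)$ to nonzero elements of $L^{p(z)}(\Omega)$, and this is the only place where the log-H\"older hypothesis is really used.

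Next I would record the two tautological identifications
$$
\|u\| = \bigl\| |Du| \bigr\|_{p(z)}, \qquad \hat{\rho}(Du) = \int_\Omega |Du|^{p(z)}\,dz = \rho(|Du|),
$$
so that the ``gradient modular'' $\hat{\rho}(Du)$ is exactly the scalar modular $\rho$ evaluated at $|Du| \in L^{p(z)}(\Omega)$, and the Sobolev norm $\|\cdot\|$ coincides with the Luxemburg norm of $|Du|$. In other words, the pair $(\|\cdot\|,\hat{\rho}(D\cdot))$ on $W^{1,p(z)}_0(\Omega)$ is the pull-back of the pair $(\|\cdot\|_{p(z)},\rho)$ on $L^{p(z)}(\Omega)$ under the isometric injection $u \mapsto |Du|$.

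With these identifications in hand, each of (a)--(e) is now a direct transcription of the corresponding clause of Proposition \ref{prop1} applied to $v := |Du| \in L^{p(z)}(\Omega)$: clauses (a), (b), (c) are applied at a fixed nonzero $v$ (where the hypothesis $u \neq 0$ supplies $v \not\equiv 0$ via the first paragraph), while (d) and (e) are applied along a sequence $v_n := |Du_n|$, using $\|u_n\| \to 0$ (resp. $+\infty$) iff $\|v_n\|_{p(z)} \to 0$ (resp. $+\infty$). There is no genuine obstacle; the only point requiring a sentence of care is the one already noted, namely the implication $u \neq 0 \Rightarrow |Du| \not\equiv 0$, which is exactly the content of the Poincar\'e inequality under the log-H\"older assumption on $p$.
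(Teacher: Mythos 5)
Your reduction is correct and is exactly what the paper intends: Proposition \ref{prop2} is stated with the word ``Similarly'' and no separate proof, precisely because each clause is Proposition \ref{prop1} applied to $v=|Du|\in L^{p(z)}(\Omega)$, using $\hat{\rho}(Du)=\rho(|Du|)$ and $\|u\|=\||Du|\|_{p(z)}$. Your one point of care --- that the log-H\"older condition and the Poincar\'e inequality guarantee $u\neq 0\Rightarrow |Du|\not\equiv 0$, so that Proposition \ref{prop1}(a) is applicable --- is the right observation and completes the argument.
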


Let $p\in C^{0,\frac{1}{|\ln t|}}(\overline{\Omega})$. Then
$$
W^{1,p(z)}_0(\Omega)^*=W^{-1,p'(z)}(\Omega) \quad \left(\frac{1}{p(z)}+\frac{1}{p'(z)}=1\right).
$$

Consider the operator $A:W^{1,p(z)}_0(\Omega)\to W^{-1,p'(z)}(\Omega)=W^{1,p(z)}_0(\Omega)^*$ defined by
$$
\langle A(u),h\rangle=\int_\Omega |Du|^{p(z)-2}(Du, Dh)_{\RR^N}dz \mbox{ for all }u,h\in W^{1,p(z)}_0(\Omega).
$$

This operator has the following properties (see Gasinski and Papageorgiou \cite{9Gas-Pap}).

\begin{prop}\label{prop3}
  The map $A:W^{1,p(z)}_0(\Omega)\to W^{-1,p'(z)}(\Omega)$ defined above is bounded (that is, maps bounded sets to bounded sets), continuous, strictly monotone (hence maximal monotone, too) and of type $(S)_+$, that is $u_n\overset{w}{\to}u$ in $W^{1,p(z)}_0(\Omega)$ and $\displaystyle{\limsup_{n\to\infty}\langle A(u_n),(u_n-u)\rangle \leq0}$ $\Rightarrow$ $u_n\to u$ in $W^{1,p(z)}_0(\Omega)$.
\end{prop}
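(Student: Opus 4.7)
The plan is to establish the four properties in turn, using Propositions \ref{prop1}--\ref{prop2} repeatedly to pass between norm and modular estimates.

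\textbf{Boundedness and continuity.} For boundedness, I would apply the variable-exponent H\"older inequality to obtain
$$|\langle A(u),h\rangle|\le\int_\Omega|Du|^{p(z)-1}|Dh|\,dz\le C\,\bigl\||Du|^{p(z)-1}\bigr\|_{p'(z)}\|Dh\|_{p(z)},$$
and then use the modular relation (Proposition~\ref{prop1}(c)) applied to $|Du|^{p(z)-1}\in L^{p'(z)}(\Omega)$ to bound the $L^{p'(z)}$-norm by a polynomial in $\|Du\|_{p(z)}=\|u\|$. For continuity, if $u_n\to u$ in $W^{1,p(z)}_0(\Omega)$, then by Proposition~\ref{prop2}(d), $\hat\rho(D(u_n-u))\to 0$, so $Du_n\to Du$ in $L^{p(z)}(\Omega,\RR^N)$. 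Passing to a subsequence with an $L^{p(z)}$-dominating envelope and using the continuity of $\xi\mapsto|\xi|^{p(z)-2}\xi$, dominated convergence gives $|Du_n|^{p(z)-2}Du_n\to|Du|^{p(z)-2}Du$ in $L^{p'(z)}(\Omega,\RR^N)$, which yields $A(u_n)\to A(u)$ in $W^{-1,p'(z)}(\Omega)$.

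\textbf{Strict monotonicity.} I would invoke the classical pointwise inequality, valid for every real $r>1$,
$$(|\xi|^{r-2}\xi-|\eta|^{r-2}\eta)\cdot(\xi-\eta)>0 \text{ for all } \xi\neq\eta\in\RR^N,$$
applied with $r=p(z)$ at almost every $z\in\Omega$. Integration yields $\langle A(u)-A(v),u-v\rangle\ge 0$ with equality only if $Du=Dv$ almost everywhere, which by Poincar\'e forces $u=v$ in $W^{1,p(z)}_0(\Omega)$.

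\textbf{The $(S)_+$ property.} This is the main step. Assume $u_n\overset{w}{\to}u$ with $\limsup_n\langle A(u_n),u_n-u\rangle\le 0$. Since $A(u)\in W^{-1,p'(z)}(\Omega)$, weak convergence gives $\langle A(u),u_n-u\rangle\to 0$, hence
$$\limsup_n\langle A(u_n)-A(u),u_n-u\rangle\le 0.$$
By monotonicity the liminf is $\ge 0$, so $\langle A(u_n)-A(u),u_n-u\rangle\to 0$, which means the nonnegative integrand
$$I_n(z):=\bigl(|Du_n|^{p(z)-2}Du_n-|Du|^{p(z)-2}Du\bigr)\cdot(Du_n-Du)$$
converges to $0$ in $L^1(\Omega)$. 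On the set $\{p(z)\ge 2\}$ the standard inequality $I_n(z)\ge c|Du_n-Du|^{p(z)}$ applies directly; on $\{1<p(z)<2\}$ I would use the classical refined inequality involving $(|Du_n|+|Du|)^{p(z)-2}$ together with H\"older on the resulting product to again force $Du_n\to Du$ in measure. Passing to a subsequence, $Du_n\to Du$ a.e.\ in $\Omega$. Combining this with the weak convergence and the reverse Fatou argument applied to $\hat\rho(Du_n)$ (or equivalently using uniform convexity of $W^{1,p(z)}_0(\Omega)$ together with $\|u_n\|\to\|u\|$ extracted from Proposition~\ref{prop2}), I conclude $\|u_n-u\|\to 0$, that is, $u_n\to u$ in $W^{1,p(z)}_0(\Omega)$.

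\textbf{Main obstacle.} The delicate point is the variable-exponent $(S)_+$ implication at points where $1<p(z)<2$, since the convenient lower bound $I_n(z)\ge c|Du_n-Du|^{p(z)}$ fails. Handling this requires the two-sided pointwise estimate with the weight $(|Du_n|+|Du|)^{p(z)-2}$ and a careful H\"older split on $\Omega$ into sub- and superquadratic zones before one can upgrade a.e.\ convergence plus modular boundedness to strong convergence via uniform convexity.
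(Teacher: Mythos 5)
Your argument is correct and is the standard proof of these properties for the $p(z)$-Laplacian; note that the paper itself does not prove Proposition \ref{prop3} at all, but simply cites Gasinski--Papageorgiou \cite{9Gas-Pap}, where essentially the argument you outline (modular--norm estimates for boundedness and continuity, the pointwise vector inequalities split into the cases $p(z)\geq 2$ and $1<p(z)<2$ for strict monotonicity and the $(S)_+$ property) is carried out. The one step worth writing carefully is the final upgrade from a.e.\ convergence of the gradients to strong convergence: the cleanest route is to use the weak $L^{p'(z)}$-convergence of $|Du_n|^{p(z)-2}Du_n$ to deduce $\hat{\rho}(Du_n)\to\hat{\rho}(Du)$ and then apply the reverse-Fatou (Brezis--Lieb type) argument you mention to get $\hat{\rho}(D(u_n-u))\to 0$, since the alternative via uniform convexity requires first converting modular convergence into norm convergence $\|u_n\|\to\|u\|$, which Proposition \ref{prop2}(c) alone does not immediately give.
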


Next, we prove two strong comparison theorems, which will be used in the analysis of problem $(P_\lambda)$. The first one  extends Proposition 2.6 of Arcoya and Ruiz \cite{3Arc-Rui} to the abstract setting of anisotropic problems .

We will use  the following notation. Given $h,g \in L^\infty(\Omega)$, we write that $h\prec g$ if and only if for every $K\subseteq \Omega$ compact, we can find $c_K>0$ such that $0<c_K\leq g(z)-h(z)$ for a.a. $z\in K$. Evidently, if $h,g\in C(\Omega)$ and $h(z)<g(z)$ for all $z\in \Omega$, then $h\prec g$. Also, by $C_+$ we denote the positive cone of $C_0^1(\overline{\Omega})=\{u\in C^1(\overline{\Omega}):\: u|_{\partial\Omega}=0\}$, that is, $C_+=\{u\in C_0^1(\overline{\Omega}):\:u(z)\geq0 \mbox{ for all }z\in\overline{\Omega}\}$. This cone has a nonempty interior given by
$$
{\rm int}\,C_+=\left\{u\in C_+:\: u(z)>0 \mbox{ for all }z\in\Omega,\ \frac{\partial u}{\partial n}|_{\partial\Omega}<0\right\},
$$
with $n(\cdot)$ being the outward unit normal on $\partial\Omega$.
\begin{prop}\label{prop4}
  If $p\in C^1(\overline{\Omega})$, $1<p(z)$ for all $z\in\overline{\Omega}$, $\hat{\xi},h,g\in L^\infty(\Omega)$, $\hat{\xi}(z)\geq0$ for a.a. $z\in\Omega$, $h\prec g$, $u\in W^{1,p(z)}(\Omega)$, $u\not=0$, $v\in {\rm int}\,C_+$ and
\begin{eqnarray*} 
  && -\Delta_{p(z)}u+\hat{\xi}(z)|u|^{p(z)-2}u=h(z) \mbox{ in }\Omega,\:u|_{\partial\Omega}\leq0, \\
  && -\Delta_{p(z)}v +\hat{\xi}(z)v^{p(z)-1}=g(z) \mbox{ in } \Omega,\:\frac{\partial v}{\partial n}|_{\partial\Omega}<0,
\end{eqnarray*}
then $v-u\in{\rm int}\,C_+$.
\end{prop}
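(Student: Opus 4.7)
The plan is to first establish the weak comparison $u\leq v$ in $\Omega$ and then upgrade it to the strict inequality $v-u\in\mathrm{int}\,C_+$. For the weak comparison, I would subtract the two equations and test against $\varphi=(u-v)^+$. Since $v\in\mathrm{int}\,C_+$ has vanishing trace on $\partial\Omega$ and $u|_{\partial\Omega}\leq 0$, we have $\varphi\in W_0^{1,p(z)}(\Omega)$, and
$$\langle A(u)-A(v),\varphi\rangle + \int_\Omega \hat{\xi}(z)\bigl(|u|^{p(z)-2}u - v^{p(z)-1}\bigr)\varphi\,dz = \int_\Omega(h-g)\varphi\,dz.$$
On $\{\varphi>0\}$ we have $u>v>0$, so $|u|^{p(z)-2}u - v^{p(z)-1}\geq 0$; combined with $\hat{\xi}\geq 0$ the potential integral is nonnegative. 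The principal part is nonnegative by the strict monotonicity of $A$ (Proposition \ref{prop3}), while the right-hand side is nonpositive because $h\prec g$ implies in particular $h\leq g$ a.e. in $\Omega$. All three terms must therefore vanish, and strict monotonicity of $A$ forces $\varphi\equiv 0$, i.e. $u\leq v$ a.e. in $\Omega$. Anisotropic regularity theory applied to the equation for $u$ (using $h,\hat{\xi}\in L^\infty$ to obtain an $L^\infty$-bound and then $C^{1,\alpha}$ regularity up to the boundary, cf.\ Fan and Acerbi-Mingione) then ensures $y:=v-u\in C^1(\overline{\Omega})$ and $y\geq 0$.

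To upgrade to a strict comparison I would linearize. Fix a compact $K\subset\Omega$. Using the mean value theorem applied to $\xi\mapsto|\xi|^{p(z)-2}\xi$ and $t\mapsto |t|^{p(z)-2}t$, subtraction of the two equations yields
$$-\divv\bigl(M(z)Dy\bigr) + c(z)\,y \;=\; g(z)-h(z)\;\geq\; c_K>0\quad\text{in }K,$$
where $M(z)=\int_0^1\nabla_\xi\bigl(|Du+\theta Dy|^{p(z)-2}(Du+\theta Dy)\bigr)\,d\theta$ is symmetric and positive definite wherever $Du$ or $Dv$ is nonzero, and $c\in L^\infty(\Omega)$ encodes the potential contribution. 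Since $v\in\mathrm{int}\,C_+$ gives $\partial v/\partial n<0$ on $\partial\Omega$, we have $|Dv|$ bounded away from zero in a neighbourhood of $\partial\Omega$, so $M$ is uniformly elliptic there. The strong maximum principle together with the Hopf boundary-point lemma, applied to the linear differential inequality satisfied by $y\geq 0$ with a strictly positive right-hand side on $K$, gives $y(z)>0$ for all $z\in\Omega$ and $\partial y/\partial n<0$ on $\partial\Omega$, which is precisely $v-u\in\mathrm{int}\,C_+$.

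The main obstacle is the possible degeneracy of the principal operator at points where both $Du$ and $Dv$ vanish: there $M$ ceases to be uniformly elliptic and the classical linear maximum principle does not apply directly. Following the scheme of Arcoya-Ruiz \cite{3Arc-Rui}, this is overcome by a localization argument and (if needed) an approximation/truncation procedure combined with the Vazquez-type strong maximum principle for degenerate quasilinear operators, crucially using the quantitative bound $g-h\geq c_K>0$ on compacta provided by the relation $h\prec g$; this uniform gap is precisely what prevents $y$ from touching zero at an interior critical point of the principal part and from having a vanishing normal derivative at the boundary.
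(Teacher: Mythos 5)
Your overall strategy (weak comparison by testing with $(u-v)^+$, regularity to get $C^1(\overline{\Omega})$ functions, linearization of the difference of the two equations, strong maximum principle plus Hopf lemma) is exactly the paper's, and the weak-comparison and regularity steps are fine. However, at the decisive point you correctly identify "the possible degeneracy of the principal operator at points where both $Du$ and $Dv$ vanish" as the main obstacle and then dispose of it with "a localization argument and (if needed) an approximation/truncation procedure" -- but that \emph{is} the proof, and you have not supplied it. In particular, the uniform gap $g-h\geq c_K>0$ does \emph{not} by itself rescue the linearized strong maximum principle at a degenerate point: if $Du(z_0)=Dv(z_0)=0$ the matrix $M(z)$ is not uniformly elliptic on any ball around $z_0$, and no V\'azquez-type statement for the linearized operator applies there. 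So as written the argument does not exclude that the coincidence set $E=\{u=v\}$ meets the common critical set.

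What the paper actually does at this point is a two-stage argument you would need to reproduce. First, the linearization/V\'azquez step is used only to prove the inclusion $E\subseteq\hat{E}:=\{Du=Dv=0\}$: at a coincidence point with $Dv\neq0$ one has $Du=Dv\neq0$ (since $u-v$ attains its maximum there), so on a small ball the convex combinations $(1-t)Du+tDv$ stay away from zero, the frozen-coefficient operator is uniformly elliptic with H\"older coefficients, and the strictly negative right-hand side gives a contradiction. Second -- and this is the idea missing from your sketch -- since $v\in{\rm int}\,C_+$ forces $Dv\neq0$ near $\partial\Omega$, the set $\hat{E}$ is compact, so it can be enclosed in an open $U\subset\subset\Omega$ with $u+\varepsilon<v$ on $\partial U$ and $h+\varepsilon<g$ a.e.\ in $U$. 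One then compares the \emph{shifted} function $u+\delta$ with $v$ on $U$ via the \emph{weak} comparison principle (no ellipticity needed), using uniform continuity of $(z,x)\mapsto|x|^{p(z)-2}x$ to absorb the perturbation of the potential term into the $\varepsilon$-gap; this yields $u+\delta\leq v$ on $\overline{U}$ and hence $E=\emptyset$. Two smaller points: writing the potential difference as $c(z)y$ with $c\in L^\infty$ is not justified when $p(z)<2$ (the mean-value coefficient can blow up where $u$ and $v$ are close to zero), which is why the paper keeps that term on the right-hand side and controls it by uniform continuity; and for the boundary normal derivative the paper invokes Zhang's anisotropic Hopf lemma rather than the linearized one, since near $\partial\Omega$ only $Dv$, not the convex combinations of $Du$ and $Dv$, is known to be nonvanishing.
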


\begin{proof}
From Theorem 4.1 of Fan and Zhao \cite{6Fan-Zha}  (see also Proposition 3.1 of Gasinski and Papageorgiou \cite{9Gas-Pap}), we have that $u\in L^\infty(\Omega)$. Then invoking Theorem 1.3 of Fan \cite{5Fan}, we infer that $u\in C^1(\overline{\Omega})$.
Also exploiting the monotonicity of $A(\cdot)$ (see Proposition \ref{prop3}), we see that $u\leq v$.
We introduce the following two sets.
$$
E=\{z\in\Omega:\:u(z)=v(z)\} \mbox{ and } \hat{E}=\{z\in\Omega:\: Du(z)=Dv(z)=0\}.
$$

{\it Claim}. $E\subseteq \hat{E}$.

Let $y=u-v$. We have $y\leq0$. Consider $z\in E$. Then $y(z)=\displaystyle{\max_{\Omega}y=0}$. So, we have $Dy(z)=0$, hence $Du(z)=Dv(z)$. Arguing by contradiction, suppose that $z\not\in \hat{E}$. Then  $Dv(z)\not=0$ and so we can find an open ball $B\subseteq \Omega$ centered at $z$ such that
$$
|Du(x)|>0,\;|Dv(x)|>0,\;(Du(x),Dv(x))_{\RR^N}>0 \mbox{ for all }x\in B.
$$

Consider the $N\times N$ matrix $A(x)=\left(a_{ij}(x)\right)_{i,j=1}^{N}$ with entries $a_{ij}(x)$ defined by
$$
a_{ij}(x)=\int_0^1\left[(1-t)Du(x)+tDv(x)\right]\left[\delta_{ij}+(p(x)-2)
\frac{D_i((1-t)u+tv)D_j((1-t)u+tv)}{|(1-t)Du+tDv|^2}\right]dt
$$

We have $a_{ij}\in C^{0,\alpha}(\overline{B})$ for some $\alpha\in(0,1)$ (see Fan \cite{5Fan}) and
\begin{equation}\label{eq1}
  -\divv \left(A(x)Dy(x)\right)=h(x)-g(x)-\hat{\xi}(x)\left[|u(x)|^{p(x)-2}u(x)-v(x)^{p(x)-1}\right] \mbox{ in } B
\end{equation}
(see also Guedda and V\'eron \cite{10Gue-Ver}). By choosing the ball $B\subseteq\Omega$ even smaller if necessary, we obtain that in \eqref{eq1} the linear differential operator is strictly elliptic, while the right-hand side is strictly negative. Invoking Theorem 4 of V\'azquez \cite{20Vaz}, we have
\begin{eqnarray*} 
   && y(x)<0 \mbox{ for all }x\in B, \\
   &\Rightarrow& y(z)<0, \mbox{ a contradiction.}
\end{eqnarray*}

Therefore $z\in\hat{E}$ and this proves the claim.

Recall that $v\in{\rm int}\,C_+$. Therefore $\hat{E}$ is compact and so we can  find $U\subseteq\Omega$ such that
\begin{equation}\label{eq2}
  \hat{E}\subseteq U\subseteq \overline{U}\subseteq\Omega.
\end{equation}

For $\varepsilon>0$ small, we have
\begin{eqnarray} 
  && u(z)+\varepsilon<v(z) \mbox{ for all }z\in\partial U \mbox{ (see \eqref{eq2}),} \label{eq3} \\
  && h(z)+\varepsilon<g(z) \mbox{ for a.a. }z\in U \mbox{ (recall that $h\prec g$). } \label{eq4}
\end{eqnarray}

We choose $\delta>0$ small so that
\begin{eqnarray}\nonumber 
   && \left|\hat{\xi}(z)\left(|x|^{p(z)-2}x-|w|^{p(z)-2}w\right)\right| \\
   &\leq& \|\hat{\xi}\|_\infty \left||x|^{p(z)-2}x-|w|^{p(z)-2}w\right|<\varepsilon \label{eq5}\\ \nonumber
       && \mbox{ if } |x-w|<\delta,\;z\in \overline{U} \mbox{ (recall that $p\in C^1(\overline{\Omega})$).}
\end{eqnarray}

Then we have for a.a. $z\in U$
\begin{eqnarray*} 
   && -\Delta_{p(z)}(u+\delta)+\hat{\xi}(z)|u+\delta|^{p(z)-2}(u+\delta) \\
  &=& -\Delta_{p(z)} u+\hat{\xi}(z)|u+\delta|^{p(z)-2}(u+\delta) \\
  &=& h(z)+\hat{\xi}(z)\left[|u+\delta|^{p(z)-2}(u+\delta)-|u|^{p(z)-2}u\right] \\
  &\leq& h(z)+\varepsilon \mbox{ (see \eqref{eq5}) } \\
  &<& g(z) \mbox{ (see \eqref{eq4}) } \\
  &=& -\Delta_{p(z)} v +\hat{\xi}(z)v^{p(z)-1}, \\
  &\Rightarrow& u(z)+\delta\leq v(z) \mbox{ for all }z\in\overline{U} \\
  && \mbox{ (by the weak comparison principle, see \eqref{eq3}), } \\
  &\Rightarrow& u(z)<v(z) \mbox{ for all }z\in \overline{U}, \\
  &\Rightarrow& E=\emptyset.
\end{eqnarray*}

Also, from the anisotropic maximum principle of Zhang \cite{21Zha}, we have
\begin{eqnarray*} 
  && \frac{\partial y}{\partial n}|_{\partial\Omega}>0, \\
  &\Rightarrow& \frac{\partial(v-u)}{\partial n}|_{\partial\Omega}<0.
\end{eqnarray*}
The proof is now complete.
\end{proof}

For the second strong comparison principle, we use the following open cone in $C^1(\overline{\Omega})$
$$
D_+=\{u\in C^1(\overline{\Omega}):\: u(z)>0 \mbox{ for all }z\in\Omega, u|_{\partial\Omega\cap U^{-1}(0)}<0\}.
$$

\begin{prop}\label{prop5}
  If $p\in C^1(\overline{\Omega})$, $1<p(z)$ for all $z\in\overline{\Omega}$, $\hat{\xi},h,g\in L^\infty(\Omega)$, $\hat{\xi}(z)\geq0$ for a.a. $z\in\Omega$, $0<\eta\leq g(z)-h(z)$ for a.a. $z\in\Omega$ and $u,v\in C^1(\overline{\Omega})$ satisfy $u\leq v$ and
\begin{eqnarray*} 
   && -\Delta_{p(z)}u+\hat{\xi}(z)|u|^{p(z)-2}u=h(z)\mbox{ in }\Omega, \\
   && -\Delta_{p(z)}v +\hat{\xi}(z)|v|^{p(z)-2}v=g(z) \mbox{ in } \Omega,
\end{eqnarray*}
then $v-u\in D_+$.
\end{prop}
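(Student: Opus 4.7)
The plan is to adapt the strategy of Proposition \ref{prop4} to the present setting, simplified in the interior by the uniform strict gap $g - h \geq \eta > 0$ but requiring more care on $\partial\Omega$, since $v$ is no longer assumed to lie in $\mathrm{int}\, C_+$.

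First, I would set $y = v - u \in C^1(\overline{\Omega})$ with $y \geq 0$, and introduce $E = \{z \in \Omega : u(z) = v(z)\}$ and $\hat{E} = \{z \in \Omega : Du(z) = Dv(z) = 0\}$. As in the proof of Proposition \ref{prop4}, at any $z_0 \in E$ the function $y$ attains its interior minimum, so $Dy(z_0) = 0$. To verify $E \subseteq \hat{E}$, I would argue by contradiction: assuming $Du(z_0) = Dv(z_0) \neq 0$, linearize the $p(z)$-Laplacian on a small ball $B$ centered at $z_0$ using the same integrated coefficient matrix $A(x)$ as in \eqref{eq1}. This yields
\[
-\mathrm{div}(A(x)Dy(x)) = (g-h)(x) - \hat{\xi}(x)\bigl[|v(x)|^{p(x)-2}v(x) - |u(x)|^{p(x)-2}u(x)\bigr] \quad \text{in } B,
\]
and after shrinking $B$ so that the $\hat{\xi}$-term is smaller than $\eta/2$, the right-hand side is bounded below by $\eta/2 > 0$. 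V\'azquez's strong maximum principle \cite{20Vaz} then forces $y > 0$ on $B$, contradicting $y(z_0) = 0$.

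Second, I would prove $E = \emptyset$ by the shift-and-compare technique. Given any $z_0 \in E \subseteq \hat{E}$, pick an open $U$ with $z_0 \in U$, $\overline{U} \subseteq \Omega$, and (by continuity of $y$) $u + \varepsilon \leq v$ on $\partial U$ for small $\varepsilon > 0$. Using $p \in C^1(\overline{\Omega})$, choose $\delta \in (0, \varepsilon)$ so small that
\[
\|\hat{\xi}\|_\infty \cdot \bigl| |u+\delta|^{p(z)-2}(u+\delta) - |u|^{p(z)-2}u \bigr| < \eta \quad \text{for a.a. } z \in \overline{U}.
\]
Then $u + \delta$ is a subsolution of the equation governing $v$ on $U$, and the weak comparison principle gives $u + \delta \leq v$ on $\overline{U}$, contradicting $u(z_0) = v(z_0)$. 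Hence $E = \emptyset$ and $v > u$ throughout $\Omega$. For the boundary behavior required by the cone $D_+$, at any $z_1 \in \partial\Omega$ with $u(z_1) < v(z_1)$ continuity gives a neighborhood on which $v - u$ remains strictly positive, while at any $z_1 \in \partial\Omega$ with $u(z_1) = v(z_1)$ the linearization above together with the anisotropic boundary point lemma of Zhang \cite{21Zha} forces $\partial(v-u)/\partial n(z_1) < 0$, completing $v - u \in D_+$.

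The principal obstacle is the second step: unlike Proposition \ref{prop4}, we have no a priori reason for $\hat{E}$ to stay compactly contained in $\Omega$, so the weak comparison argument must be set up robustly near any part of $\overline{\Omega}$. The uniform gap $g - h \geq \eta$ (strictly stronger than $h \prec g$) is exactly what allows the shift $\delta$ to absorb the error from the $\hat{\xi}$-term consistently, yielding the needed contradiction regardless of the location of $z_0$.
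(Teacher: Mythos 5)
Your first step (linearization plus V\'azquez where the gradients do not vanish) and your boundary argument (Zhang's boundary point lemma on the zero set of $v-u$ in $\partial\Omega$) are sound and match the paper. The genuine gap is in your second step, the proof that $E=\emptyset$ by shift-and-compare. You ``pick an open $U$ with $z_0\in U$, $\overline U\subseteq\Omega$, and (by continuity of $y$) $u+\varepsilon\leq v$ on $\partial U$.'' Continuity does not give this: it requires $y=v-u>0$ everywhere on $\partial U$, i.e. $\partial U\cap E=\emptyset$, which is precisely what is in question. In Proposition \ref{prop4} such a $U$ exists because $v\in{\rm int}\,C_+$ forces $\hat E$ (hence $E$) to be a compact subset of $\Omega$, so one takes $U\supseteq\hat E$ and gets $u<v$ on the compact set $\partial U$. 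Here, as you yourself observe, no such compactness is available; if $E$ accumulates around $z_0$, every admissible $\partial U$ meets $E$ and the barrier $u+\varepsilon\leq v$ on $\partial U$ fails. The uniform gap $\eta$ does absorb the $\hat\xi$-error in the differential inequality, but it supplies no boundary barrier, so the weak comparison principle cannot be invoked and the contradiction at $z_0\in\hat E$ is never reached. Your closing paragraph correctly names this as the principal obstacle but does not overcome it.

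The repair --- and the paper's actual argument --- is to drop the dichotomy between $E\subseteq\hat E$ and the shift-and-compare step entirely. At any $z_0\in\Omega$ with $w(z_0)=0$ one has $u(z_0)=v(z_0)$, so by the uniform continuity of $(z,x)\mapsto|x|^{p(z)-2}x$ on $\overline\Omega\times\RR$ there is a ball $B_\delta(z_0)$ on which
$$
g(z)-h(z)-\hat\xi(z)\bigl||v(z)|^{p(z)-2}v(z)-|u(z)|^{p(z)-2}u(z)\bigr|\geq\frac{\eta}{2}>0 ;
$$
the linearized inequality $-\divv(A(z)Dw)\geq\eta/2>0$ then holds on $B_\delta(z_0)$ and V\'azquez's theorem gives $w>0$ there, contradicting $w(z_0)=0$ directly, with no case distinction on whether the gradients vanish. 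In other words, the hypothesis $g-h\geq\eta$ (strictly stronger than $h\prec g$) is exactly what allows the interior strong maximum principle to be applied at every zero of $w$ --- this is the step your proposal replaces with an argument that does not close.
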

\begin{proof}
  The reasoning is similar to that of the previous proposition.

Let $w=v-u\geq0$, $w\in C^1(\overline{\Omega})$. As in the proof of Proposition \ref{prop4}, we have
\begin{equation}\label{eq6}
  -\divv(A(z)Dw)=g(z)-h(z)-\hat{\xi}(z)\left[|v|^{p(z)-2}v-|u|^{p(z)-2}u\right] \mbox{ in }\Omega.
\end{equation}

In this case we have $a_{ij}\in W^{1,\infty}(\Omega)$ for all $i,j=1,...,N$.

Suppose that for some $z_0\in\Omega$, we have $w(z_0)=0$. Then $u(z_0)=v(z_0)$. The map $(z,x)\mapsto|x|^{p(z)-2}x$
is uniformly continuous on $\overline{\Omega}\times\RR$. So, we can find $\delta>0$ small such that
$$
g(z)-h(z)-\hat{\xi}(z)\left||v(z)|^{p(z)-2}v(z)-|u(z)|^{p(z)-2}u(z)\right|\geq\frac{\eta}{2}>0
$$
for a.a. $z\in B_\delta(z_0)=\{z\in\Omega:\:|z-z_0|<\delta\}$.

From \eqref{eq6} we have
$$
-\divv \left(A(z)Dw\right)\geq\frac{\eta}{2}>0 \mbox{ for a.a. }z\in B_\delta(z_0).
$$

Invoking Theorem 4 of V\'azquez \cite{20Vaz}, we have
$$
w(z)>0 \mbox{ for all }z\in B_\delta(z_0),
$$
a contradiction to the fact that $w(z_0)=0$. Therefore
$$
w(z)>0 \mbox{ for all }z\in\Omega.
$$

Let $E_0=\{z\in\partial\Omega:\:w(z)=0\}$. We can assume that $E_0\not=\emptyset$. Otherwise we already have that
 $v(z)>u(z)$ for all $z\in\overline{\Omega}$ and we are done. By Zhang \cite{21Zha} we have
\begin{eqnarray*} 
  && \frac{\partial w}{\partial n}(z_0)<0, \\
  &\Rightarrow& w=v-u\in D_+.
\end{eqnarray*}
The proof is now complete.
\end{proof}

Now we introduce our hypotheses on the data of problem $(P_\lambda)$.

\smallskip
$H_0$: $p,q\in C^1(\overline{\Omega})$ and $1<q_-\leq q(z)\leq q_+<p_-\leq p(z)\leq p_+$ for all $z\in\overline{\Omega}$,
$\xi\in L^\infty(\Omega)$.

\smallskip
$H_1$: $f:\Omega\times\RR\to\RR$ is a function which for all $x\in\RR$, is measurable in $z\in\Omega$, for a.a. $z\in\Omega$ we have $f(z,\cdot)\in C^1(\RR)$ and
\begin{itemize}
  \item[(i)] $a\leq f(z,x)\leq a(z)(1+x^{r(z)-1})$ for a.a. $z\in\Omega$, all $x\in\RR$, with $a\in L^\infty(\Omega)$, $r\in C(\overline{\Omega})$ and $p_+<r(z)<p(z)^*$ for all $z\in\overline{\Omega}$;
  \item[(ii)] if $F(z,x)=\displaystyle{\int_0^x f(z,s)ds}$, then $\displaystyle{\lim_{x\to+\infty}\frac{F(z,x)}{x^{p_+}}=+\infty}$ uniformly for a.a. $z\in\Omega$;
  \item[(iii)] if $e(z,x)=f(z,x)x-p_+F(z,x)$, then there exist $M>0$ and $\hat{C}>\|\xi\|_\infty\left(\frac{p_+}{p_-}-1\right)$ such that
$$
e'_x(z,x)\geq\hat{C}x^{p(z)-1} \mbox{ for a.a. }z\in\Omega, \mbox{ all }x\geq M;
$$
  \item[(iv)] $\displaystyle{\lim_{x\to0^+}\frac{f(z,x)}{x^{p_- -1}}=0}$ uniformly for a.a. $z\in\Omega$.
\end{itemize}

\begin{rem}\label{rem1}
Since we are looking for positive solutions and all the above hypotheses concern the positive semi-axis $\RR_+=[0,+\infty)$, without any loss of generality, we may assume that
\begin{equation}\label{eq7}
  f(z,x)=0 \mbox{ for a.a. }z\in\Omega, \mbox{ all }x\leq0.
\end{equation}
\end{rem}

\begin{rem}\label{rem2}
Hypothesis $H_1(ii)$ implies that for a.a. $z\in\Omega$, the function $F(z,\cdot)$ is $p_+$-superlinear. This combined with  hypothesis $H_1(iii)$, says that for a.a. $z\in\Omega$, $f(z,\cdot)$ is $(p_+-1)$-superlinear. However, the superlinearity of $f(z,\cdot)$ is not expressed using the usual for such problems $AR$-condition (see Ambrosetti and Rabinowitz \cite{2Amb-Rab}). Instead we employ the less restrictive condition $H_1(iii)$ that permits the consideration of $(p_+-1)$-superlinear functions with ``slower" growth near $+\infty$, which fail to satisfy the $AR$-condition (see the examples below). Note that hypothesis $H_1(iii)$ is not global and implies that for a.a. $z\in\Omega$, eventually $e(z,\cdot)$ is nondecreasing. Then Lemma 2.4(iv) of Li and Yang \cite{13Li-Yan}, implies that there exists $\mu\in L^1(\Omega)$ such that
\begin{equation}\label{eq8}
  e(z,x)\leq e(z,y)+\mu(z) \mbox{ for a.a. }z\in\Omega, \mbox{ all }0\leq x\leq y
\end{equation}
(a global quasi-monotonicity condition on $e(z,\cdot)$). Also it is equivalent to saying that there exists $\hat{M}>0$ such that for a.a. $z\in\Omega$, the function
$$
x\mapsto\frac{f(z,x)}{x^{p_+-1}}
$$
is nondecreasing on $[\hat{M},+\infty)$ (see Li and Yang \cite{13Li-Yan}).
\end{rem}

\begin{example}\label{examples}
  Consider the following two functions
$$
f_1(z,x)=x^{r(z)-1} \mbox{ for all }x\geq0,
$$
$$
f_2(z,x)=\left\{
           \begin{array}{ll}
             \hat{C}x^{r(z)-1}, & \hbox{ if }x\in[0,1] \\
             \hat{C}(x^{p(z)-1}+x^{m(z)-1}), & \hbox{ if }1<x
           \end{array}
         \right. \mbox{ (see \eqref{eq7}). }
$$
with $r,m\in C(\overline{\Omega})$, $p_+<r(z)<p(z)^*$, $m(z)\leq p(z)$ and $r(z)=p(z)+m(z)-2$.

Note that $f_1(z,\cdot)$ satisfies the $AR$-condition, while $f_2(z,\cdot)$ need not (the $AR$-condition is not satisfied if $\{z\in\overline{\Omega}:\:m(z)=p_+\}$ has nonempty interior).
\end{example}

Finally, we mention that if $X$ is a Banach space and $\varphi\in C^1(X,\RR)$, then $K_\varphi$ denotes the critical set of $\varphi$, that is,
$$
K_\varphi=\{u\in X:\:\varphi'(u)=0\}.
$$

Moreover, a set $S\subseteq W^{1,p(z)}_0(\Omega)$ is said to be ``downward directed", if given $u,v\in S$, we can find $w\in S$ such that $w\leq u$, $w\leq v$. In addition if $u,v\in W^{1,p(z)}_0(\Omega)$ with $u\leq v$, then we define
\begin{eqnarray*} 
  && [u,v]=\{y\in W^{1,p(z)}_0(\Omega):\: u(z)\leq y(z)\leq v(z) \mbox{ for a.a. }z\in\Omega\} \\
  && {\rm int}_{C_0^1(\overline{\Omega})}[u,v]=\mbox{ the interior in }C_0^1(\overline{\Omega}) \mbox{ of }[u,v]\cap C_0^1(\overline{\Omega}), \\
  && [u)=\{y\in W^{1,p(z)}_0(\Omega):\:u(z)\leq y(z) \mbox{ for a.a. }z\in\Omega\}.
\end{eqnarray*}

\section{Positive solutions}

We introduce the following two sets:
\begin{eqnarray*} 
   && \mathcal{L}=\{\lambda>0:\mbox{ problem $(P_\lambda)$ admits a positive solution}\}, \\
   && S_\lambda=\mbox{ set of positive solutions of problem $(P_\lambda)$.}
\end{eqnarray*}

Let $\vartheta>\|\xi\|_\infty$ (see hypothesis $H_0$), $\lambda>0$ and consider the functional $\hat{\varphi}_\lambda: W^{1,p(z)}_0(\Omega)\to\RR$ defined by
\begin{eqnarray*} 
  \hat{\varphi}_\lambda(u) &=& \int_\Omega \frac{1}{p(z)}|Du|^{p(z)}dz+\int_\Omega \frac{\xi(z)}{p(z)}|u|^{p(z)}dz+\int_\Omega \frac{\vartheta}{p(z)}(u^-)^{p(z)}dz \\
  &-& \lambda\int_\Omega \frac{1}{q(z)}(u^+)^{q(z)}dz-\int_\Omega F(z,u^+)dz \mbox{ for all }u\in W^{1,p(z)}_0(\Omega).
\end{eqnarray*}

Recall that $u^+=\max\{u,0\}$, $u^-=\max\{-u,0\}$ and if $u\in W^{1,p(z)}_0(\Omega)$, then
$$
u^+,u^-\in W^{1,p(z)}_0(\Omega),\;u=u^+-u^-,\;|u|=u^+ +u^-.
$$

We have that $\hat{\varphi}_\lambda(\cdot)\in C^1(W^{1,p(z)}_0(\Omega))$ (see R\u adulescu and Repov\v s \cite[p. 31]{18Rad-Rep}).

\begin{prop}\label{prop6}
  If hypotheses $H_0$, $H_1$ hold and $\lambda>0$, then $\hat{\varphi}_\lambda(\cdot)$ satisfies the $C$-condition.
\end{prop}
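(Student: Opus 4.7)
The plan is to verify the Cerami condition in three stages: first, show that the negative parts of a Cerami sequence converge to zero in norm; second, establish boundedness of the positive parts by a contradiction argument (the main technical step); third, pass to the weak limit and upgrade to strong convergence using the $(S)_+$ property of $A$ from Proposition~\ref{prop3}.

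Let $\{u_n\}\subset W^{1,p(z)}_0(\Omega)$ satisfy $|\hat{\varphi}_\lambda(u_n)|\leq M_1$ and $(1+\|u_n\|)\hat{\varphi}'_\lambda(u_n)\to 0$ in $W^{-1,p'(z)}(\Omega)$. Testing $\hat{\varphi}'_\lambda(u_n)$ against $h=-u_n^-$ and exploiting the disjointness of the supports of $u_n^+$ and $u_n^-$, one obtains
$$
\langle \hat{\varphi}'_\lambda(u_n),-u_n^-\rangle = \hat{\rho}(Du_n^-)+\int_\Omega(\xi(z)+\vartheta)(u_n^-)^{p(z)}dz.
$$
Since $\vartheta>\|\xi\|_\infty$ the second summand is non-negative, while the left-hand side tends to zero because $\|u_n^-\|\leq \|u_n\|$ and $(1+\|u_n\|)\|\hat{\varphi}'_\lambda(u_n)\|_*\to 0$. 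Hence $\hat{\rho}(Du_n^-)\to 0$, so by Proposition~\ref{prop2}(d) we get $u_n^-\to 0$ in $W^{1,p(z)}_0(\Omega)$.

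For the boundedness of $\{u_n^+\}$ I argue by contradiction. Suppose $\|u_n^+\|\to +\infty$ along a subsequence and normalize $y_n=u_n^+/\|u_n^+\|$. Up to a further subsequence, $y_n\rightharpoonup y\geq 0$ in $W^{1,p(z)}_0(\Omega)$ and $y_n\to y$ in $L^{r(z)}(\Omega)$. If $|\{y>0\}|_N>0$, then $u_n^+(z)\to +\infty$ on $\{y>0\}$; the $p_+$-superlinearity of $F$ from $H_1(ii)$ combined with Fatou's lemma forces $\|u_n^+\|^{-p_+}\int_\Omega F(z,u_n^+)dz\to +\infty$, whereas the energy lower bound $\hat{\varphi}_\lambda(u_n)\geq -M_1$, the Sobolev embedding estimates and $q_+<p_+$ deliver the opposite bound, a contradiction. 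In the delicate subcase $y\equiv 0$ I would use the $t_n$-maximization trick: pick $t_n\in[0,1]$ with $\hat{\varphi}_\lambda(t_nu_n^+)=\max_{t\in[0,1]}\hat{\varphi}_\lambda(tu_n^+)$. For each fixed $\gamma>1$, evaluating at $t=\gamma/\|u_n^+\|\in(0,1)$ and exploiting the strong convergence $y_n\to 0$ in $L^{p(z)}(\Omega)$, $L^{q(z)}(\Omega)$ and $L^{r(z)}(\Omega)$ gives $\hat{\varphi}_\lambda(\gamma y_n)\geq \gamma^{p_-}/p_+ +o_n(1)$, so by maximality $\hat{\varphi}_\lambda(t_nu_n^+)\to +\infty$. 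The boundary values $t_n\in\{0,1\}$ are ruled out directly (for $t_n=1$ via $\hat{\varphi}_\lambda(u_n^+)=\hat{\varphi}_\lambda(u_n)+o(1)$, a consequence of the previous paragraph), and for $t_n\in(0,1)$ the stationarity $\langle\hat{\varphi}'_\lambda(t_nu_n^+),t_nu_n^+\rangle=0$ permits rewriting $p_+\hat{\varphi}_\lambda(t_nu_n^+)$ as $\int_\Omega e(z,t_nu_n^+)dz$ plus variable-exponent correction integrals. Invoking the quasi-monotonicity estimate~\eqref{eq8}, valid since $0\leq t_nu_n^+\leq u_n^+$, together with the quantitative non-AR bound $\hat{C}>\|\xi\|_\infty(p_+/p_- -1)$ from $H_1(iii)$, then yields a uniform upper bound on $\hat{\varphi}_\lambda(t_nu_n^+)$, contradicting divergence to $+\infty$. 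This balancing of variable-exponent losses against the non-AR estimate on $e$ is the step I expect to be the most technically delicate, since the clean constant-exponent cancellation is unavailable here.

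With $\{u_n\}$ now bounded, extract $u_n\rightharpoonup u$ in $W^{1,p(z)}_0(\Omega)$ with $u_n\to u$ in $L^{r(z)}(\Omega)$. Testing $\hat{\varphi}'_\lambda(u_n)$ against $u_n-u$, every term besides $\langle A(u_n),u_n-u\rangle$ vanishes in the limit by the H\"older inequality, the growth estimate $H_1(i)$ and strong $L^{r(z)}$-convergence. Therefore $\limsup_n\langle A(u_n),u_n-u\rangle\leq 0$, and the $(S)_+$ property of $A$ from Proposition~\ref{prop3} upgrades weak to strong convergence $u_n\to u$ in $W^{1,p(z)}_0(\Omega)$, which is the required Cerami property.
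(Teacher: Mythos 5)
Your proposal is correct and follows essentially the same route as the paper: kill $u_n^-$ by testing with $-u_n^-$, prove boundedness of $u_n^+$ by contradiction via the normalization $y_n=u_n^+/\|u_n^+\|$, the Fatou argument on $\{y>0\}$, and the $t_n$-maximization trick combined with the quasi-monotonicity of $e$ (the paper packages the variable-exponent corrections and the term $\xi(z)[\frac{p_+}{p(z)}-1]x^{p(z)}$ into an auxiliary function $\beta_\lambda$ whose eventual monotonicity follows from $\hat C>\|\xi\|_\infty(\frac{p_+}{p_-}-1)$ and $q_+<p_-$, which is exactly the balancing you flag as delicate), and finally conclude with the $(S)_+$ property. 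No gaps.
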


\begin{proof}
  We consider a sequence $\{u_n\}_{n\geq1}\subseteq W^{1,p(z)}_0(\Omega)$ such that
\begin{eqnarray} 
  && |\hat{\varphi}_\lambda(u_n)|\leq M_1 \mbox{ for some $M_1>0$, all }n\in\NN, \label{eq9}\\
  && (1+\|u_n\|)\hat{\varphi}'_\lambda(u_n)\to0 \mbox{ in } W^{-1,p'(z)}(\Omega)=W^{1,p(z)}_0(\Omega)^* \mbox{ as }n\to\infty. \label{eq10}
\end{eqnarray}

From \eqref{eq10} we have
\begin{eqnarray}\nonumber 
  \left|\langle \hat{\varphi}'_\lambda(u_n),h\rangle\right| &\leq& \frac{\varepsilon_n\|h\|}{1+\|u_n\|} \mbox{ for all }h\in W^{1,p(z)}_0(\Omega), \mbox{ all }n\in \NN, \mbox{ with }\varepsilon_n\to0^+, \\ \nonumber
  \Rightarrow\Big|\langle A(u_n),h\rangle\!\!&+&\!\!\int_\Omega \xi(z) |u_n|^{p(z)-2}u_n h dz-\int_\Omega\vartheta(u_n^-)^{p(z)-1}hdz \\
   &-&\lambda \int_\Omega(u_n^+)^{q(z)-1}hdz-\int_\Omega f(z,u_n^+)hdz\Big|\leq\frac{\varepsilon_n\|h\|}{1+\|u_n\|} \label{eq11}\\ \nonumber
    &&\mbox{ for all }h\in W^{1,p(z)}_0(\Omega), \ n\in \NN.
\end{eqnarray}

In \eqref{eq11} we choose $h=-u_n^-\in W^{1,p(z)}_0(\Omega)$. We have
\begin{eqnarray}\nonumber 
  && \left|\hat{\rho}(Du_n^-)+\int_\Omega \left[\xi(z)+\vartheta\right](u_n^-)^{p(z)}dz\right|\leq \varepsilon_n \mbox{ for all }n\in\NN, \\
  &\Rightarrow& u_n^-\to0 \mbox{ in } W^{1,p(z)}_0(\Omega) \label{eq12} \\ \nonumber
  && \mbox{ (recall that $\vartheta>\|\xi\|_\infty$ and see Proposition \ref{prop2}(d)). }
\end{eqnarray}

In \eqref{eq11} we choose $h=u_n^+\in W^{1,p(z)}_0(\Omega)$. Then
\begin{equation}\label{eq13}
  \left|\hat{\rho}(Du_n^+)+\int_\Omega\xi(z)(u_n^+)^{p(z)}dz-\lambda\int_\Omega (u_n^+)^{q(z)}dz-\int_\Omega f(z,u_n^+)u_n^+dz\right|\leq \varepsilon_n
\end{equation}
for all $n\in \NN$.

On the other hand, from \eqref{eq9} and \eqref{eq12}, we have
\begin{eqnarray}\nonumber 
|\int_\Omega\frac{p_+}{p(z)}|Du_n^+|^{p(z)}dz+\int_\Omega\frac{p_+}{p(z)}\xi(z)|u_n^+|^{p(z)}dz\!\!\!&-&\!\!\!\lambda\int_\Omega \frac{p_+}{q(z)}(u_n^+)^{q(z)} dz \\
   &-& \int_\Omega p_+F(z,u_n^+)dz|\leq M_2 \label{eq14} \\ \nonumber
  &&\mbox{ for some $M_2>0$, all $n\in\NN$. }
\end{eqnarray}

From \eqref{eq13} and \eqref{eq14} it follows that
\begin{eqnarray}\nonumber 
  && \int_\Omega \left[\frac{p_+}{p(z)}-1\right]|Du_n^+|^{p(z)}dz+\int_\Omega\left[\frac{p_+}{p(z)}-1\right]\xi(z)(u_n^+)^{p(z)}dz \\
  &-& \lambda\int_\Omega \left[\frac{p_+}{q(z)}-1\right](u_n^+)^{q(z)}dz+\int_\Omega e(z,u_n^+)dz\leq M_3 \label{eq15} \\ \nonumber
  && \mbox{ for some } M_3>0, \mbox{ all }n\in\NN.
\end{eqnarray}

Let $\beta_\lambda(z,x)=\lambda\left[1-\frac{p_+}{q(z)}\right]x^{q(z)}+e(z,x)+\xi(z)\left[\frac{p_+}{p(z)}-1\right]x^{p(z)}$ for all $x\geq0$.

Then from \eqref{eq15} we have
\begin{equation}\label{eq16}
  \int_\Omega \beta_\lambda(z,u_n^+)dz\leq M_3 \mbox{ for all }n\in \NN.
\end{equation}

\smallskip
{\it Claim}. The sequence $\{u_n^+\}_{n\geq1}\subseteq W^{1,p(z)}_0(\Omega)$ is bounded.

We argue by contradiction. So, suppose that the claim is not true. Then passing to a subsequence if necessary, we may assume that
\begin{equation}\label{eq17}
  \|u_n^+\|\to\infty \mbox{ as }n\to\infty.
\end{equation}

Let $y_n=\frac{u_n^+}{\|u_n^+\|}$, $n\in\NN$. Then $\|y_n\|=1$, $y_n\geq0$ for all $n\in\NN$. So, we may assume that
\begin{equation}\label{eq18}
  y_n\overset{w}{\to}y \mbox{ in }W^{1,p(z)}_0(\Omega) \mbox{ and } y_n\to y \mbox{ in }L^{p(z)}(\Omega),\;y\geq0.
\end{equation}

Let $\Omega_+=\{z\in\Omega:\:y(z)>0\}$ and $\Omega_0=\{z\in \Omega:\:y(z)=0\}$. Then $\Omega=\Omega_+\cup \Omega_0$ (see \eqref{eq18}).

 First we assume that $|\Omega_+|_N>0$ (by $|\cdot|_N$ we denote the Lebesgue measure on $\RR^N$). We have $u_n^+(z)\to+\infty$ for a.a. $z\in\Omega_+$ and so on account of hypothesis $H_1(ii)$ we have
\begin{eqnarray}\nonumber 
   && \frac{F(z,u_n^+(z))}{u_n^+(z)^{p_+}}\to+\infty \mbox{ for a.a. }z\in\Omega_+, \\ \nonumber
   &\Rightarrow& \frac{F(z,u_n^+(z))}{\|u_n^+\|^{p_+}}=\frac{F(z,u_n^+(z))}{u_n^+(z)^{p_+}}y_n(z)^{p_+}\to+\infty \mbox{ for a.a. }z\in\Omega_+, \\ \nonumber
   &\Rightarrow& \int_{\Omega_+}\frac{F(z,u_n^+)}{\|u_n^+\|^{p_+}}dz\to+\infty \mbox{ (by Fatou's lemma), } \\
   &\Rightarrow& \int_\Omega \frac{F(z,u_n^+)}{\|u_n^+\|^{p_+}}dz\to+\infty \mbox{ as }n\to+\infty. \label{eq19}
\end{eqnarray}

On account of \eqref{eq17}, we may assume that $\|u_n^+\|\geq1$ for all $n\in \NN$. Then from \eqref{eq9} and \eqref{eq12}, we have
\begin{eqnarray}\nonumber 
   && \lambda\int_\Omega \frac{1}{q(z)} \frac{(u_n^+)^{q(z)}}{\|u_n^+\|^{p_+}}dz+\int_\Omega \frac{F(z,u_n^+)}{\|u_n^+\|^{p_+}}dz \\ \nonumber
   &\leq& \varepsilon'_n +\frac{1}{p_-}\int_\Omega |Dy_n|^{p(z)}dz+\frac{\|\xi\|_\infty}{p_-}\int_\Omega y_n^{p(z)}dz \mbox{ with } \varepsilon'_n\to0^+ \\
  &\leq& M_4 \mbox{ for some } M_4>0, \mbox{ all } n\in\NN \mbox{ (see \eqref{eq18}). } \label{eq20}
\end{eqnarray}

Comparing \eqref{eq19} and \eqref{eq20}, we have a contradiction.

So, we assume that $y\equiv0$ (that is, $|\Omega|_N=|\Omega_0|_N$). We define
\begin{equation}\label{eq21}
  \hat{\varphi}_\lambda(t_n u_n)=\max\{\hat{\varphi}_\lambda(tu_n):\:0\leq t\leq1\}.
\end{equation}

Let $v_n=\eta^{\frac{1}{p_-}}y_n$ for all $n\in \NN$, with $\eta>0$. Evidently we have
\begin{equation}\label{eq22}
  v_n\overset{w}{\to}0 \mbox{ in }W^{1,p(z)}_0(\Omega) \mbox{ (see \eqref{eq18}). }
\end{equation}

Hypothesis $H_1(i)$, \eqref{eq18} and the dominated convergence theorem imply that
\begin{equation}\label{eq23}
  \int_\Omega F(z,v_n)dz\to0 \mbox{ as }n\to\infty.
\end{equation}

Also, we have
\begin{eqnarray} 
  && \int_\Omega \frac{1}{p(z)}\xi(z) v_n^{p(z)}dz\to0,\;\int_\Omega \frac{1}{q(z)}v_n^{q(z)}dz\to0 \label{eq24} \\ \nonumber
  && \mbox{ (see \eqref{eq22} and Proposition \ref{prop1}).  }
\end{eqnarray}

Moreover, \eqref{eq17} implies that we can find $n_0\in \NN$ such that
\begin{equation}\label{eq25}
  \frac{\eta^{\frac{1}{p_-}}}{\|u_n^+\|}\in(0,1] \mbox{ for all }n\geq n_0.
\end{equation}

Then from \eqref{eq21} and \eqref{eq25}, we have
\begin{eqnarray*} 
  \hat{\varphi}_\lambda(t_n u_n^+) &\geq& \hat{\varphi}_\lambda(v_n) \\
   &=& \int_\Omega \frac{1}{p(z)}|Dv_n|^{p(z)}dz+\int_\Omega \frac{1}{p(z)} \xi(z) v_n^{p(z)}dz \\
   &-& \lambda\int_\Omega \frac{1}{q(z)}v_n^{q(z)}dz-\int_\Omega F(z,v_n)dz \mbox{ for all }n\geq n_0, \\
   &\geq& \frac{1}{2p_+}\eta \mbox{ for all }n\geq n_1\geq n_0 \\
   && \mbox{ (see \eqref{eq23}, \eqref{eq24} and use the Poincar\'e inequality). }
\end{eqnarray*}

Since $\eta>0$ is arbitrary, we infer that
\begin{equation}\label{eq26}
  \hat{\varphi}_\lambda(t_n u_n^+)\to+\infty \mbox{ as }n\to \infty.
\end{equation}

We know that
\begin{equation}\label{eq27}
  \hat{\varphi}_\lambda(0)=0 \mbox{ and }\hat{\varphi}_\lambda(u_n^+)\leq M_5 \mbox{ for some }M_5>0, \mbox{ all }n\in\NN.
\end{equation}

From \eqref{eq26} and \eqref{eq27} it follows that we can find $n_2\in\NN$ such that
\begin{equation}\label{eq28}
 t_n\in(0,1) \mbox{ for all }n\geq n_2.
\end{equation}

Then from \eqref{eq21} and \eqref{eq28} we infer that
\begin{eqnarray}\nonumber 
  &&  t_n\frac{d}{dt}\hat{\varphi}_\lambda(tu_n^+)|_{t=t_n}=0, \\
  &\Rightarrow& \langle \hat{\varphi}'_\lambda (t_n u_n^+),t_n u_n^+\rangle=0 \mbox{ for all }n\geq n_2 \label{eq29} \\ \nonumber
  && \mbox{ (by the chain rule). }
\end{eqnarray}

For all $n\geq n_2$ we have
\begin{eqnarray}\nonumber 
  && \hat{\varphi}_\lambda(t_n u_n^+) \\ \nonumber
  &=& \hat{\varphi}_\lambda(t_n u_n^+)-\frac{1}{p_+}\langle\hat{\varphi}'_\lambda(t_n u_n^+),t_n u_n^+ \rangle \mbox{ (see \eqref{eq29}) } \\ \nonumber
  &\leq& \int_\Omega \left[\frac{1}{p(z)}-\frac{1}{p_+}\right]|D(t_n u_n^+)|^{p(z)}dz+\int_\Omega\left[\frac{1}{p(z)}-\frac{1}{p_+}\right]\xi(z)(t_n u_n^+)^{p(z)}dz \\ \nonumber
  &-& \lambda\int_\Omega \left[\frac{1}{q(z)}-\frac{1}{p_+}\right](t_nu_n^+)^{q(z)}dz+\frac{1}{p_+}\int_\Omega e(z,t_n u_n^+)dz \\
  &\leq& \int_\Omega \left[\frac{1}{p(z)}-\frac{1}{p_+}\right]|Du_n^+|^{p(z)}dz+\frac{1}{p_+}\int_\Omega \beta_\lambda(z,t_n u_n^+)dz \label{eq30}.
\end{eqnarray}

For the integrand $\beta_\lambda(z,x)$, we have for a.a. $z\in\Omega$, all $x\geq0$
\begin{eqnarray*} 
  (\beta_\lambda)'_x(z,x) &=& \lambda\left[q(z)-p_+\right]x^{q(z)-1}+e'_x(z,x)+\xi(z)\left[p_+-p(z)\right]x^{p(z)-1} \\
  &\geq& \hat{C}x^{p(z)-1}-\lambda C_1 x^{q(z)-1} \mbox{ for some }C_1>0 \\
  && \mbox{ (see hypothesis $H_1(iii)$). }
\end{eqnarray*}

Since $q_+<p_-$, we can find $M_6\geq1$ such that
\begin{eqnarray}\nonumber 
  && (\beta_\lambda)'_x(z,x)\geq0 \mbox{ for a.a. }z\in\Omega, \mbox{ all }x\geq M_6, \\ \nonumber
  &\Rightarrow& \beta_\lambda(z,\cdot) \mbox{ is nondecreasing on }[M_6,\infty) \mbox{ for a.a. }z\in\Omega, \\
  &\Rightarrow& \beta_\lambda(z,x)\leq \beta_\lambda(z,y)+\mu_\lambda(z) \label{eq31} \\ \nonumber
  && \mbox{ for a.a. $z\in\Omega$, all $0\leq x \leq y$, with $\mu_\lambda\in L^1(\Omega)$. }
\end{eqnarray}

From \eqref{eq30} and \eqref{eq31} it follows that
\begin{eqnarray*} 
  && \hat{\varphi}_\lambda(t_n u_n^+) \\
  &\leq& \int_\Omega\left[\frac{1}{p(z)}-\frac{1}{p_+}\right]|Du_n^+|^{p(z)}dz+\frac{1}{p_+}\int_\Omega \beta_\lambda (z,u_n^+)dz+\frac{1}{p}\|\mu_\lambda\|_1 \\
  && \mbox{ for all }n\geq n_2 \\
  &=& \hat{\varphi}_\lambda(u_n^+)-\frac{1}{p_+}\langle \hat{\varphi}'_\lambda(u_n^+),u_n^+\rangle\\
  &\leq& \hat{\varphi}_\lambda(u_n^+)+\frac{\varepsilon_n}{p_+} \mbox{ for all }n\geq n_2 \mbox{ (see \eqref{eq13}), } \\
  &\Rightarrow& \hat{\varphi}_\lambda(u_n^+)\to+\infty \mbox{ as }n\to\infty \mbox{ (see \eqref{eq26}), } \\
  && \mbox{ a contradiction (see \eqref{eq27}). }
\end{eqnarray*}

Therefore $\{u_n^+\}\subseteq W^{1,p(z)}_0(\Omega)$ is bounded and this proves the claim.

Then from \eqref{eq12} and the claim it follows that
$$
\{u_n\}_{n\geq1}\subseteq W^{1,p(z)}_0(\Omega) \mbox{ is bounded. }
$$

We may assume that
\begin{equation}\label{eq32}
  u_n\overset{w}{\to}u \mbox{ in }W^{1,p(z)}_0(\Omega) \mbox{ and }u_n\to u \mbox{ in }L^{r(z)}(\Omega) \mbox{ as }n\to\infty.
\end{equation}

In \eqref{eq11} we choose $h=u_n-u\in W^{1,p(z)}_0(\Omega)$, pass to the limit as $n\to\infty$ and use \eqref{eq32}. Then
\begin{eqnarray*} 
  && \lim_{n\to\infty} \langle A(u_n),u_n-u\rangle=0, \\
  &\Rightarrow& u_n\to u \mbox{ in } W^{1,p(z)}_0(\Omega) \mbox{ (see Proposition \ref{prop3}), } \\
  &\Rightarrow& \hat{\varphi}_\lambda(\cdot) \mbox{ satisfies the $C$-condition. }
\end{eqnarray*}
The proof is now complete.
\end{proof}

\begin{prop}
  If hypotheses $H_0$, $H_1$ hold, then $\mathcal{L}\not=\emptyset$ and we have $S_\lambda\subseteq{\rm int}\,C_+$ for every $\lambda\in\mathcal{L}$.
\end{prop}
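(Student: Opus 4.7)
The plan is to establish the two assertions separately: $\mathcal{L}\neq\emptyset$ by producing a positive solution for some $\lambda>0$, and $S_\lambda\subseteq{\rm int}\,C_+$ by nonlinear regularity plus a strong maximum principle.

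For the first claim, I would adopt a \emph{truncate-and-minimize} strategy. Fix any $u_0\in{\rm int}\,C_+$. Since $q_+<p_-$ and $f\ge 0$ on $\RR_+$ (so $F\ge 0$), a direct modular estimate using Propositions \ref{prop1}--\ref{prop2} gives, for every $\lambda>0$ and all sufficiently small $t>0$,
\[
\hat\varphi_\lambda(tu_0)\le C_1 t^{p_-}-\lambda C_2 t^{q_+}<0,
\]
so $\hat\varphi_\lambda$ is strictly negative along a half-line from the origin. To extract a critical point I would truncate the superlinear reaction: set $\tilde f(z,x)=f(z,\min\{x,K\})$ for $x\ge 0$ and $\tilde f(z,x)=0$ for $x<0$ with $K>0$ to be fixed, and define $\tilde\varphi_\lambda$ by replacing $F$ with the corresponding $\tilde F$ in $\hat\varphi_\lambda$. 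Since $\tilde f$ is now bounded, the only sub-$p_-$ terms are the concave one (dominated by the gradient term because $q_+<p_-$) and the $\xi$-contribution (absorbed by the $\vartheta(u^-)^{p(z)}$ term via $\vartheta>\|\xi\|_\infty$), making $\tilde\varphi_\lambda$ coercive on $W^{1,p(z)}_0(\Omega)$ and sequentially weakly lower semicontinuous by the compact Sobolev embeddings.

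Hence $\tilde\varphi_\lambda$ attains its infimum at some $u_\lambda\in W^{1,p(z)}_0(\Omega)$; the displayed negativity forces $u_\lambda\not\equiv 0$, and testing $\tilde\varphi'_\lambda(u_\lambda)=0$ against $-u_\lambda^-$ together with $\vartheta>\|\xi\|_\infty$ yields $u_\lambda\ge 0$. To undo the truncation I would combine the anisotropic Moser iteration of Fan--Zhao \cite{6Fan-Zha} with the a priori bound $\tilde\varphi_\lambda(u_\lambda)\le 0$ to produce an $L^\infty$-estimate for $u_\lambda$ depending on $\lambda$; choosing $K$ above this bound makes the truncation inactive, so $u_\lambda$ genuinely solves $(P_\lambda)$ and $\lambda\in\mathcal{L}$.

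For the second claim, let $u\in S_\lambda$. By Theorem 4.1 of Fan--Zhao \cite{6Fan-Zha} we have $u\in L^\infty(\Omega)$, and Theorem 1.3 of Fan \cite{5Fan} upgrades this to $u\in C^1(\overline{\Omega})\cap C_+$. Choosing $\vartheta>\|\xi\|_\infty$, rewrite the equation as
\[
-\Delta_{p(z)}u+(\xi(z)+\vartheta)u^{p(z)-1}=\vartheta u^{p(z)-1}+\lambda u^{q(z)-1}+f(z,u)\ge 0,
\]
where $\xi+\vartheta>0$ and the right-hand side is nonnegative and not identically zero (as $u\not\equiv 0$). The anisotropic strong maximum principle and Hopf boundary-point lemma of Zhang \cite{21Zha} then force $u(z)>0$ in $\Omega$ and $\partial u/\partial n|_{\partial\Omega}<0$, i.e.\ $u\in{\rm int}\,C_+$. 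The main anticipated obstacle is the first part, specifically securing an $L^\infty$-bound for the truncated minimizer that is sharp enough to remove the truncation; if this is delicate, a workable alternative is to minimize $\hat\varphi_\lambda$ directly on a small closed ball $\overline B_\rho\subset W^{1,p(z)}_0(\Omega)$, using Propositions \ref{prop1}--\ref{prop2} to check that for $\rho$ and $\lambda$ both small the infimum is negative and attained in the open interior of $B_\rho$, yielding a critical point of the full untruncated functional.
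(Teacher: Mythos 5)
Your second assertion ($S_\lambda\subseteq{\rm int}\,C_+$ via Fan--Zhao boundedness, Fan's $C^{1,\alpha}$ regularity and Zhang's anisotropic maximum principle) is exactly the paper's argument and is fine. The problem is your primary route to $\mathcal{L}\neq\emptyset$. First, the coercivity claim for the truncated functional $\tilde\varphi_\lambda$ is wrong: the term $\int_\Omega\frac{\vartheta}{p(z)}(u^-)^{p(z)}dz$ only compensates the indefinite potential on the \emph{negative} part of $u$, whereas the dangerous contribution is $\int_\Omega\frac{\xi(z)}{p(z)}(u^+)^{p(z)}dz$, which is a full $p(z)$-order term of uncontrolled sign; since $\xi$ is sign-changing and no smallness of $\xi^-$ is assumed, this cannot be absorbed by the gradient modular (this is precisely the non-coercivity of the operator stressed in the introduction), so truncating $f$ alone does not make $\tilde\varphi_\lambda$ coercive. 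Second, even granting a minimizer, the de-truncation step is circular: the Moser-iteration $L^\infty$-bound for $u_\lambda$ depends on $\|\tilde f\|_\infty\sim C(1+K^{r_+-1})$ (and on the energy bound, whose constants also degrade as $K\to\infty$), so there is no guarantee that one can choose $K$ larger than the resulting bound. The correct ways to tame the superlinear term here are either a mountain-pass argument or a truncation at a known ordered pair of sub/supersolutions (as the paper does later in Proposition \ref{prop8}), not a truncation at an arbitrary large constant.

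Your fallback — minimizing $\hat\varphi_\lambda$ on a small closed ball for $\lambda$ small — is sound and should be promoted to the main argument: the paper's estimate culminating in \eqref{eq36}--\eqref{eq38} shows $\hat\varphi_\lambda\geq\hat m_\lambda>0$ on the sphere $\|u\|=t_0$ for $\lambda\in(0,\lambda_0)$, while $\hat\varphi_\lambda(tu_0)<0$ for small $t>0$ because $q_+<p_-$ and $F\geq0$; weak compactness of the ball and sequential weak lower semicontinuity give a minimizer, which must lie in the open ball and is therefore a critical point of the untruncated $\hat\varphi_\lambda$; testing with $-u^-$ and the regularity/maximum-principle step then finish. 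Note that this differs from the paper's actual proof, which keeps the same small-sphere estimate but pairs it with $\hat\varphi_\lambda(tu)\to-\infty$ and the $C$-condition (Proposition \ref{prop6}) to run the mountain pass theorem, producing a solution of \emph{positive} energy; your ball minimization produces one of \emph{negative} energy and avoids Proposition \ref{prop6} altogether. Either yields $(0,\lambda_0)\subseteq\mathcal{L}$, but as written your proposal leads with an approach that does not work and relegates the correct one to a contingency.
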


\begin{proof}
  On account of hypotheses $H_1(i),(iv)$, we see that given $\varepsilon>0$, we can find $C_2=C_2(\varepsilon)>0$ such that
\begin{equation}\label{eq33}
  F(z,x)\leq\frac{\varepsilon}{p_+}x^{p_+}+C_2 x^{r_+} \mbox{ for a.a. }z\in\Omega, \mbox{ all }x\geq0.
\end{equation}

For every $u\in W^{1,p(z)}_0(\Omega)$, we have
\begin{eqnarray}\nonumber 
  \hat{\varphi}_\lambda(u) &\geq& \int_\Omega \frac{1}{p(z)}|Du|^{p(z)}dz+\int_\Omega \frac{1}{p(z)}\xi(z)|u|^{p(z)}dz \\
  &-& \frac{\lambda}{q_+}\int_\Omega (u^+)^{q(z)}dz-\frac{\varepsilon}{p_+}\|u\|_{p_+}^{p_+}-C_3\|u\|^{r_+} \label{eq34} \\ \nonumber
  && \mbox{ for some $C_3>0$ (see \eqref{eq33} and recall that $\vartheta>\|\xi\|_\infty$).}
\end{eqnarray}

For $\|u\|_{p(z)}\leq 1$ we have
\begin{eqnarray}\nonumber 
   && \int_\Omega \frac{1}{p(z)}\xi(z)|u|^{p(z)}dz\leq\frac{\|\xi\|_\infty}{p_-}\|u\|_{p(z)}^{p_-}\leq C_4\|u\|^{p_-} \mbox{ for some }C_4>0, \\
   &\Rightarrow& \int_\Omega \frac{1}{p(z)} \xi(z)(u^+)dz\leq \lambda\|u\|^{q_+}+C_5\|u\|^{r_+} \label{eq35} \\ \nonumber
   && \mbox{ for some $C_5=C_5(\lambda)>0$ (recall that $q_+<p_-\leq p_+<r_+$). }
\end{eqnarray}

We return to \eqref{eq34} and use \eqref{eq35}. Then for $u\in W^{1,p(z)}_0(\Omega)$ with $\max\{\|u\|,\|u\|_{p(z)}\}\leq1$ we have
\begin{eqnarray*} 
  \hat{\varphi}_\lambda(u)&\geq& \frac{1}{p_+}\left(1-\varepsilon C_6\right)\|u\|^{p_+}-C_7\left(\lambda\|u\|^{q_+}+\|u\|^{r_+}\right)  \\
  && \mbox{ for some }C_6,\, C_7>0.
\end{eqnarray*}

We choose $\varepsilon\in\left(0,\frac{1}{C_6}\right)$ and obtain
\begin{eqnarray}\nonumber 
   && \hat{\varphi}_\lambda(u)\geq C_8\|u\|^{p_+}-C_7\left(\lambda\|u\|^{q_+}+\|u\|^{r_+}\right) \mbox{ for some }C_8>0, \\
  &\Rightarrow& \hat{\varphi}_\lambda(u)\geq\left[C_8-C_7\left(\lambda\|u\|^{q_+-p_+}+\|u\|^{r_+-p_+}\right)\right]\|u\|^{p_+}. \label{eq36}
\end{eqnarray}

Consider the function
$$
k_\lambda(t)=\lambda t^{q_+-p_+}+t^{r_+-p_+} \mbox{ for all }t>0.
$$

Evidently, $k_\lambda\in C^1(0,+\infty)$ and since $q_+<p_-\leq p_+<r_+$ we have
$$
k_\lambda(t)\to+\infty \mbox{ as }t\to0^+ \mbox{ and as }t\to+\infty.
$$

So, we can find $t_0>0$ such that
\begin{eqnarray}\nonumber 
   && k_\lambda(t_0)=\min\{k_\lambda(t):\: t>0\}, \\ \nonumber
   &\Rightarrow& k'_\lambda(t_0)=0, \\ \nonumber
   &\Rightarrow& \lambda(p_+-q_+)t_0^{q_+-p_+-1}=(r_+ -p_+)t_0^{r_+-p_+-1}, \\
   &\Rightarrow& t_0=\left[\frac{\lambda(p_+-q_+)}{r_+-p_+}\right]^{\frac{1}{r_+-q_+}}. \label{eq37}
\end{eqnarray}

Then
\begin{eqnarray*} 
  && k_\lambda(t_0)=\lambda^{\frac{r_+-p_+}{r_+-q_+}}
\frac{(r_+-p_+)^{\frac{p_+-q_+}{r_+-q_+}}}{(p_+-q_+)^{\frac{p_+-q_+}{r_+-q_+}}}+\lambda^{\frac{r_+-p_+}{r_+-q_+}}
\frac{(p_+-q_+)^{\frac{r_+-p_+}{r_+-q_+}}}{(r_+-p_+)^{\frac{r_+-p_+}{r_+-q_+}}}, \\
  &\Rightarrow& k_\lambda(t_0)\to0 \mbox{ as }\lambda\to 0^+.
\end{eqnarray*}

Let $C_0>0$ be such that $\|\cdot\|_{p(z)}\leq C_0\|\cdot\|$. So, we can find $\lambda_0>0$ such that
$$
0<t_0\leq \min\left\{\frac{1}{C_0},1\right\} \mbox{ and } k_\lambda(t_0)<\frac{C_8}{C_7} \mbox{ for all } \lambda\in(0,\lambda_0) \mbox{ (see \eqref{eq36}, \eqref{eq37}). }
$$

Then from \eqref{eq36} it follows that
\begin{equation}\label{eq38}
  \hat{\varphi}_\lambda(u)\geq\hat{m}_\lambda>0 \mbox{ for all }\|u\|=t_0.
\end{equation}

On account of superlinearity hypothesis $H_1(ii)$, for $u\in {\rm int}\,C_+$, we have
\begin{equation}\label{eq39}
  \hat{\varphi}_\lambda(tu)\to-\infty \mbox{ as }t\to+\infty.
\end{equation}

Then \eqref{eq38}, \eqref{eq39} and Proposition \ref{prop6}, permit the use of the mountain pass theorem. Therefore for every $\lambda\in(0,\lambda_0)$ we can find $u_\lambda\in W^{1,p(z)}_0(\Omega)$ such that
\begin{equation}\label{eq40}
  u_\lambda\in K_{\hat{\varphi}_\lambda} \mbox{ and }0<\hat{m}_\lambda\leq\hat{\varphi}_\lambda(u_\lambda) \mbox{ (see \eqref{eq38}). }
\end{equation}

From \eqref{eq40} we have $u_\lambda\not=0$ (recall that $\hat{\varphi}_\lambda(0)=0$) and
\begin{equation}\label{eq41}
  \langle \hat{\varphi}'_\lambda(u_\lambda),h\rangle=0 \mbox{ for all }h\in W^{1,p(z)}_0(\Omega).
\end{equation}

Choosing $h=-u_\lambda^-\in W^{1,p(z)}_0(\Omega)$, we obtain
\begin{eqnarray*} 
  && \int_\Omega \frac{1}{p(z)}|Du_\lambda^-|^{p(z)}dz+\int_\Omega \frac{\vartheta+\xi(z)}{p(z)}(u_\lambda^-)^{p(z)}dz=0, \\
  &\Rightarrow& \frac{1}{p_+}\left[\hat{\rho}(Du_\lambda^-)+C_9\rho(u_\lambda^-)\right]\leq0 \mbox{ for some }C_9>0, \\
  &\Rightarrow& u_\lambda\geq0,\:u_\lambda\not=0.
\end{eqnarray*}

Then from \eqref{eq41} it follows that $u_\lambda$ is a positive solution $(P_\lambda)$. As before the anisotropic regularity theory (see \cite{5Fan}, \cite{6Fan-Zha}) implies that
$$
u_\lambda\in C_+\setminus\{0\}.
$$

We have
\begin{eqnarray*} 
  && -\Delta_{p(z)}u(z)+\xi(z)u(z)^{p(z)-1}\geq0 \mbox{ for a.a. }z\in\Omega, \\
  &\Rightarrow& \Delta_{p(z)}u(z)\leq\|\xi\|_\infty u(z)^{p(z)-1} \mbox{ for a.a. }z\in\Omega, \\
  &\Rightarrow& u\in{\rm int}\,C_+ \mbox{ (see Zhang \cite{21Zha}). }
\end{eqnarray*}

So, we have proved that $(0,\lambda_0)\subseteq\mathcal{L}$ and so $\mathcal{L}\not=\emptyset$. Moreover, we have $S_\lambda\subseteq{\rm int}\,C_+$ for all $\lambda>0$.
\end{proof}

Next, we show that $\mathcal{L}$ is an interval.

\begin{prop}\label{prop8}
  If hypotheses $H_0$, $H_1$ hold, $\lambda\in\mathcal{L}$ and $0<\mu<\lambda$, then $u\in \mathcal{L}$ and given $u_\lambda\in S_\lambda$ we can find $u_\mu\in S_\mu$ such that $u_\mu\leq u_\lambda$.
\end{prop}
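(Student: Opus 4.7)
My plan is a truncation-and-minimization argument built on the observation that $u_\lambda\in S_\lambda\subseteq{\rm int}\,C_+$ is a supersolution of $(P_\mu)$ because $\mu<\lambda$. Fix $\vartheta>\|\xi\|_\infty$ so that $\xi(z)+\vartheta>0$ a.e., and define the Carath\'eodory truncation
$$
\hat g_\mu(z,x)=\left\{\begin{array}{ll} 0, & \mbox{if }x<0,\\ \mu x^{q(z)-1}+f(z,x)+\vartheta x^{p(z)-1}, & \mbox{if }0\le x\le u_\lambda(z),\\ \mu u_\lambda(z)^{q(z)-1}+f(z,u_\lambda(z))+\vartheta u_\lambda(z)^{p(z)-1}, & \mbox{if }x>u_\lambda(z).\end{array}\right.
$$
Set $\hat G_\mu(z,x)=\int_0^x\hat g_\mu(z,s)\,ds$ and introduce
$$
\psi_\mu(u)=\int_\Omega\frac{|Du|^{p(z)}}{p(z)}\,dz+\int_\Omega\frac{\xi(z)+\vartheta}{p(z)}|u|^{p(z)}\,dz-\int_\Omega\hat G_\mu(z,u)\,dz,\quad u\in W^{1,p(z)}_0(\Omega).
$$
Since $u_\lambda\in L^\infty(\Omega)$, the reaction $\hat g_\mu$ is bounded, and $\xi+\vartheta\geq\vartheta-\|\xi\|_\infty>0$; hence $\psi_\mu\in C^1(W^{1,p(z)}_0(\Omega),\RR)$ is coercive and sequentially weakly lower semicontinuous.

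By the direct method, $\psi_\mu$ admits a global minimizer $u_\mu\in W^{1,p(z)}_0(\Omega)$. To see that $u_\mu\neq0$, I would test at $tu_\lambda$ for $t\in(0,1)$ small: by $H_1(iv)$ and $q_+<p_-$, the concave contribution $\frac{\mu t^{q(z)}}{q(z)}u_\lambda^{q(z)}$ in $\hat G_\mu$ dominates the remaining $O(t^{p_-})$ terms, so $\psi_\mu(tu_\lambda)<0=\psi_\mu(0)$ and therefore $\psi_\mu(u_\mu)<0$. Testing $\psi'_\mu(u_\mu)=0$ with $-u_\mu^-$ and noting that $\hat g_\mu$ vanishes on $(-\infty,0]$, I get $\hat\rho(Du_\mu^-)+\int_\Omega(\xi+\vartheta)(u_\mu^-)^{p(z)}dz=0$, hence $u_\mu\geq0$ and $u_\mu\neq0$.

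The decisive step is the upper bound $u_\mu\leq u_\lambda$. Rewriting the weak equation for $u_\lambda$ by adding $\vartheta u_\lambda^{p(z)-1}$ to both sides, subtracting the Euler equation of $\psi_\mu$, and testing with $(u_\mu-u_\lambda)^+\in W^{1,p(z)}_0(\Omega)$, on $\{u_\mu>u_\lambda\}$ the integrand on the right collapses to $(\mu-\lambda)u_\lambda^{q(z)-1}(u_\mu-u_\lambda)\leq0$, while the left-hand side is non-negative by the strict monotonicity of $A(\cdot)$ (Proposition \ref{prop3}) together with the monotonicity of $t\mapsto t^{p(z)-1}$ and $\xi+\vartheta>0$. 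This forces $|\{u_\mu>u_\lambda\}|_N=0$, i.e., $u_\mu\leq u_\lambda$ a.e. Since $0\leq u_\mu\leq u_\lambda$, the truncation is transparent at $u_\mu$, so $u_\mu$ weakly solves $(P_\mu)$; the anisotropic regularity theory and the maximum principle of Zhang \cite{21Zha}, as in the previous proposition, upgrade $u_\mu$ to ${\rm int}\,C_+$. Therefore $u_\mu\in S_\mu$ with $u_\mu\leq u_\lambda$, and $\mu\in\mathcal{L}$.

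The main obstacle I anticipate is the comparison step, because the sign-changing $\xi$ blocks a direct monotonicity argument on the zeroth-order term. The $\vartheta$-shift is designed precisely to neutralize this: inserting $\vartheta u^{p(z)-1}$ simultaneously into the principal part of $\psi_\mu$ and into the truncation $\hat g_\mu$ leaves the equation invariant but replaces the indefinite $\xi$ by the strictly positive $\xi+\vartheta$, which is exactly what is needed to close the weak comparison inequality.
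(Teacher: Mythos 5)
Your proposal is correct and follows essentially the same route as the paper: truncate the reaction at the supersolution $u_\lambda$ with the $\vartheta$-shift, minimize the resulting coercive functional, show the minimizer is nontrivial via the concave term (using $q_+<p_-$), and establish $0\leq u_\mu\leq u_\lambda$ by testing with $-u_\mu^-$ and $(u_\mu-u_\lambda)^+$ together with the monotonicity of $A(\cdot)$ and of $t\mapsto t^{p(z)-1}$. No substantive differences to report.
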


\begin{proof}
  Since $\lambda\in \mathcal{L}$, we can find $u_\lambda\in S_\lambda\subseteq{\rm int}\,C_+$. With $\vartheta>\|\xi\|_\infty$, we introduce the Carath\'eodory function $g_\mu(z,x)$ defined by
\begin{equation}\label{eq42}
  g_\mu(z,x)=\left\{
               \begin{array}{ll}
                 \mu(x^+)^{q(z)-1}+f(z,x^+)+\vartheta(x^+)^{p(z)-1}, & \hbox{ if }x\leq u_\lambda(z) \\
                 \mu u_\lambda(z)^{q(z)-1}+f(z,u_\lambda(z))+\vartheta u_\lambda(z)^{p(z)-1}, & \hbox{ if } u_\lambda(z)<x.
               \end{array}
             \right.
\end{equation}

We set $G_\mu(z,x)=\displaystyle{\int_0^x g_\mu(z,s)ds}$ and consider the $C^1$-functional $\Psi_\mu: W^{1,p(z)}_0(\Omega)\to\RR$ defined by
$$
\Psi_\mu(u)=\int_\Omega \frac{1}{p(z)}|Du|^{p(z)}dz+\int_\Omega \frac{\vartheta+\xi(z)}{p(z)}|u|^{p(z)}dz-\int_\Omega G_\mu(z,u)dz
$$
for all $u\in W^{1,p(z)}_0(\Omega)$.

Since $\vartheta>\|\xi\|_\infty$, from \eqref{eq42} it is clear that $\Psi_\mu(\cdot)$ is coercive. Also using the fact that $W^{1,p(z)}_0(\Omega)\hookrightarrow L^{p(z)}(\Omega)$ compactly, we see  that $\Psi_\mu(\cdot)$ is sequentially weakly lower semicontinuous. So, by the Weierstrass-Tonelli theorem, there exists $u_\mu\in W^{1,p(z)}_0(\Omega)$ such that
\begin{equation}\label{eq43}
  \Psi_\mu(u_\mu)=\inf\left\{\Psi_\mu(u):\: u\in W^{1,p(z)}_0(\Omega)\right\}.
\end{equation}

Since $q_+<p_-$, we see that
\begin{eqnarray*} 
  && \Psi_\mu(u_\mu)<0=\Psi_\mu(0), \\
  &\Rightarrow& u_\mu\not=0.
\end{eqnarray*}

From \eqref{eq43} we have
$$
\Psi'_\mu(u_\mu)=0,
$$
\begin{eqnarray} 
  &\Rightarrow& \langle A(u_\mu),h\rangle+\int_\Omega\left[\vartheta+\xi(z)\right]|u_\mu|^{p(z)-2}u_\mu hdz=\int_\Omega g_\mu(z,u_\mu)hdz  \label{eq44}\\ \nonumber
  && \mbox{ for all }h\in W^{1,p(z)}_0(\Omega).
\end{eqnarray}

In \eqref{eq44} first we choose $h=-u_\mu^-\in W^{1,p(z)}_0(\Omega)$. We obtain
\begin{eqnarray*} 
  && \hat{\rho}(Du_\mu^-)+C_{10}\rho(u_\mu^-)\leq0 \mbox{ for some } C_{10}>0 \mbox{ (see \eqref{eq42}), } \\
  &\Rightarrow& u_\mu\geq0,\;u_\mu\not=0.
\end{eqnarray*}

Next, in \eqref{eq44} we choose $h=(u_\mu-u_\lambda)^+\in W^{1,p(z)}_0(\Omega)$. We have
\begin{eqnarray*} 
  && \langle A(u_\mu),(u_\mu-u_\lambda)^+\rangle +\int_\Omega [\vartheta+\xi(z)]u_\mu^{p(z)-1}(u_\mu-u_\lambda)^+dz \\
  &=& \int_\Omega [\mu u_\lambda^{q(z)-1}+f(z,u_\lambda)+\vartheta u_\lambda ^{p(z)-1}](u_\mu-u_\lambda)^+dz \mbox{ (see \eqref{eq42}) } \\
  &\leq& \int_\Omega[\lambda u_\lambda u^{q(z)-1}+f(z,u_\lambda)+\vartheta u_\lambda^{p(z)-1}](u_\mu-u_\lambda)^+dz \mbox{ (since $\mu<\lambda$) } \\
  &=& \langle A(u_\lambda),(u_\mu-u_\lambda)^+\rangle+\int_\Omega [\vartheta+\xi(z)]u_\lambda^{p(z)-1}(u_\mu-u_\lambda)^+dz \mbox{ (since $u_\lambda\in S_\lambda$). }
\end{eqnarray*}

The monotonicity of $A(\cdot)$ (see Proposition \ref{prop3}) and the fact that $\vartheta>\|\xi\|_\infty$ imply that
\begin{eqnarray*} 
  && u_\mu\leq u_\lambda, \\
  &\Rightarrow& u_\mu\in[0,u_\lambda],\;u_\mu\not=0, \\
  &\Rightarrow& u_\mu\in S_\mu\subseteq{\rm int}\,C_+ \mbox{ (see \eqref{eq42} and \eqref{eq44}). }
\end{eqnarray*}
The proof is now complete.
\end{proof}

So, according to Proposition \ref{prop8} the solution multifunction $\lambda\mapsto S_\lambda$ has a kind of weak monotonicity property. We can improve this monotonicity property by adding one more condition on the perturbation $f(z,\cdot)$.

The new hypotheses on $f(z,x)$ are the following:

\smallskip
$H_2$: $f:\Omega\times\RR\to \RR$ is a function which is measurable in $z\in\Omega$, for a.a. $z\in \Omega$ we have $f(z,\cdot)\in C^1(\RR)$, hypotheses $H_2(i)\to(iv)$ are the same as the corresponding hypotheses $H_1(i)\to(iv)$, and
\begin{itemize}
  \item[(v)] for every $\rho>0$, there exists $\hat{\xi}_\rho>0$ such that for a.a. $z\in\Omega$ the function
$$
x\mapsto f(z,x)+\hat{\xi}_\rho x^{p(z)-1}
$$
is nondecreasing on $[0,\rho]$.
\end{itemize}

\begin{rem}\label{rem3}
  This is a one-sided local H\"older condition on $f(z,\cdot)$. It is satisfied if for every $\rho>0$, we can find $\hat{C}_\rho>0$ such that $f'_x(z,x)\geq-\hat{C}_\rho x^{p(z)-1}$ for a.a. $z\in\Omega$, all $0\leq x\leq \rho$.
\end{rem}

\begin{prop}\label{prop9}
  If hypotheses $H_0$, $H_2$ hold, $\lambda\in\mathcal{L}$, $u_\lambda\in S_\lambda\subseteq{\rm int}\,C_+$ and $\mu\in (0,\lambda)$, then $\mu\in\mathcal{L}$ and we can find $u_\mu\in S_\mu\subseteq{\rm int}\,C_+$ such that
$$
u_\lambda-u_\mu\in{\rm int}\,C_+.
$$
\end{prop}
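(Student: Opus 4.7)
The plan is to bootstrap Proposition \ref{prop8} using the strong comparison principle in Proposition \ref{prop4}, with hypothesis $H_2(v)$ playing the role of a local monotonicity adjustment that makes the difference of right-hand sides strictly positive in the required sense.

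First, I would invoke Proposition \ref{prop8} to produce $u_\mu \in S_\mu \subseteq \mathrm{int}\,C_+$ with $u_\mu \leq u_\lambda$. Since $u_\lambda \in \mathrm{int}\,C_+$, I set $\rho = \|u_\lambda\|_\infty$ and take the constant $\hat{\xi}_\rho>0$ given by $H_2(v)$; by enlarging it if necessary, I may assume $\hat{\xi}_\rho\geq \|\xi\|_\infty$, so that $\hat{\xi}(z):=\xi(z)+\hat{\xi}_\rho\geq 0$ a.e. on $\Omega$. Adding $\hat{\xi}_\rho u^{p(z)-1}$ to both sides of the equations for $u_\mu$ and $u_\lambda$ rewrites them as
$$
-\Delta_{p(z)}u_\mu+\hat{\xi}(z)u_\mu^{p(z)-1}=h(z),\qquad -\Delta_{p(z)}u_\lambda+\hat{\xi}(z)u_\lambda^{p(z)-1}=g(z)
$$
in $\Omega$, where $h(z)=\mu u_\mu^{q(z)-1}+f(z,u_\mu)+\hat{\xi}_\rho u_\mu^{p(z)-1}$ and $g(z)=\lambda u_\lambda^{q(z)-1}+f(z,u_\lambda)+\hat{\xi}_\rho u_\lambda^{p(z)-1}$.

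The key algebraic step is to verify $h\prec g$. Writing
$$
g(z)-h(z)=(\lambda-\mu)u_\lambda^{q(z)-1}+\mu\bigl(u_\lambda^{q(z)-1}-u_\mu^{q(z)-1}\bigr)+\bigl[f(z,u_\lambda)+\hat{\xi}_\rho u_\lambda^{p(z)-1}\bigr]-\bigl[f(z,u_\mu)+\hat{\xi}_\rho u_\mu^{p(z)-1}\bigr],
$$
the second term is nonnegative because $q(z)-1>0$ and $u_\mu\leq u_\lambda$, while the bracketed difference is nonnegative by $H_2(v)$ applied on $[0,\rho]$. Therefore $g(z)-h(z)\geq(\lambda-\mu)u_\lambda(z)^{q(z)-1}$ a.e. on $\Omega$. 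Since $u_\lambda\in\mathrm{int}\,C_+$, on every compact $K\subseteq\Omega$ we have $u_\lambda\geq m_K>0$, so $u_\lambda^{q(z)-1}\geq c_K'>0$ on $K$ for a suitable constant; this gives $h\prec g$ in the sense of Proposition \ref{prop4}.

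Finally, since $u_\mu,u_\lambda\in\mathrm{int}\,C_+$ we have $h,g\in L^\infty(\Omega)$, $u_\mu|_{\partial\Omega}=0$, and $\partial u_\lambda/\partial n|_{\partial\Omega}<0$, so all hypotheses of Proposition \ref{prop4} are met with the roles $u\leftrightarrow u_\mu$, $v\leftrightarrow u_\lambda$. That proposition then yields $u_\lambda-u_\mu\in\mathrm{int}\,C_+$, completing the argument. The main technical obstacle is the handling of the perturbation difference $f(z,u_\lambda)-f(z,u_\mu)$, which need not have a definite sign on its own; this is precisely where $H_2(v)$ is invoked, and it is the only reason this stronger conclusion is available here but not in Proposition \ref{prop8}.
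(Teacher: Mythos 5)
Your proposal is correct and follows essentially the same route as the paper: apply Proposition \ref{prop8} to get $u_\mu\leq u_\lambda$, add $\hat{\xi}_\rho u^{p(z)-1}$ to both equations with $\hat{\xi}_\rho$ from $H_2(v)$ chosen larger than $\|\xi\|_\infty$, use the monotonicity of $x\mapsto f(z,x)+\hat{\xi}_\rho x^{p(z)-1}$ on $[0,\rho]$ together with $\mu<\lambda$ to get $g-h\geq(\lambda-\mu)u_\lambda^{q(z)-1}\succ0$, and conclude via the strong comparison principle of Proposition \ref{prop4}. Your write-up is in fact slightly more explicit than the paper's in isolating the term $(\lambda-\mu)u_\lambda^{q(z)-1}$ that delivers $h\prec g$.
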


\begin{proof}
  From Proposition \ref{prop8} we know that $\mu\in \mathcal{L}$ and there exists $u_\mu\in S_\mu\subseteq {\rm int}\,C_+$ such that
\begin{equation}\label{eq45}
  u_\lambda-u_\mu\in C_+\setminus\{0\}.
\end{equation}

Let $\rho=\|u_\lambda\|_\infty$ and let $\hat{\xi}_\rho>0$ be as postulated by hypothesis $H_2(v)$. We can always assume that $\hat{\xi}_\rho>\|\xi\|_\infty$. Then we have
\begin{eqnarray}\nonumber 
  && -\Delta_{p(z)} u_\mu+[\xi(z)+\hat{\xi}_\rho]u_\mu^{p(z)-1} \\ \nonumber
  &=& \mu u_\mu^{q(z)-1}+f(z,u_\mu)+\hat{\xi}_\rho u_\mu^{p(z)-1} \\ \nonumber
  &\leq& \mu u_\lambda^{q(z)-1}+f(z,u_\lambda)+\hat{\xi}_\rho u_\lambda^{p(z)-1} \mbox{ (see \eqref{eq45} and hypothesis $H_2(v)$) } \\ \nonumber
  &\leq& \lambda u_\lambda^{q(z)-1}+f(z,u_\lambda)+\hat{\xi}_\rho u_\lambda^{p(z)-1} \mbox{ (since $\mu<\lambda$) } \\
  &=&-\Delta_{p(z)}u_\lambda+[\xi(z)+\hat{\xi}_\rho]u_\lambda^{p(z)-1}. \label{eq46}
\end{eqnarray}

Note that since $u_\lambda\in{\rm int}\,C_+$ and $\mu<\lambda$, we have
\begin{equation}\label{eq47}
  0\prec(\lambda-\mu)u_\lambda^{q(z)-1}.
\end{equation}

Then from \eqref{eq46}, \eqref{eq47} and Proposition \ref{prop4}, we conclude that
$$
u_\lambda-u_\mu\in{\rm int}\,C_+.
$$
The proof is now complete.
\end{proof}

Next, we show that for every $\lambda\in \mathcal{L}$, the solution set $S_\lambda$ has a smallest element (minimal positive solution).

To this end, first we consider the following auxiliary problem
\begin{equation}\label{eq48}
\left\{
\begin{array}{lll}
-\Delta_{p(z)}u(z)+|\xi(z)| |u(z)|^{p(z)-2}u(z)=\lambda |u(z)|^{q(z)-2}u(z) \text{ in } \Omega,\\
u|_{\partial\Omega}=0,\;\lambda>0,u>0.
\end{array}
\right.
\end{equation}

\begin{prop}\label{prop10}
  If hypotheses $H_0$ hold and $\lambda>0$, then problem \eqref{eq48} admits a unique positive solution $\overline{u}_\lambda\in{\rm int}\,C_+$.
\end{prop}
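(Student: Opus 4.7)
The plan is standard for a concave-only auxiliary problem: prove existence by direct minimization of a suitable energy functional, and prove uniqueness by a D\'iaz--Saa/Picone-type comparison that exploits the strict sublinearity $q(z)<p(z)$.

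For existence, I consider the $C^1$ functional
$$\psi_\lambda(u)=\int_\Omega\frac{1}{p(z)}|Du|^{p(z)}dz+\int_\Omega\frac{|\xi(z)|}{p(z)}|u|^{p(z)}dz-\lambda\int_\Omega\frac{1}{q(z)}(u^+)^{q(z)}dz$$
on $W^{1,p(z)}_0(\Omega)$. Since $|\xi(z)|\geq 0$ and $q_+<p_-$, Proposition \ref{prop2} with the Poincar\'e inequality gives coercivity: for $\|u\|\geq 1$ the $p$-modular dominates by $\|u\|^{p_-}$ while the concave term is controlled by $\|u\|^{q_+}$. Convexity of the modular integrands and the compact embedding $W^{1,p(z)}_0(\Omega)\hookrightarrow L^{q(z)}(\Omega)$ give sequential weak lower semicontinuity, so by Weierstrass--Tonelli there exists a minimizer $\overline{u}_\lambda$. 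Picking $\hat u\in\text{int}\,C_+$ and examining $\psi_\lambda(t\hat u)$ for small $t\in(0,1)$, the $p$-terms are $O(t^{p_-})$ and the $q$-term is bounded below in absolute value by $ct^{q_+}$; since $q_+<p_-$ we obtain $\psi_\lambda(t\hat u)<0$ for small $t$, hence $\overline{u}_\lambda\neq 0$. Testing the Euler--Lagrange equation with $-\overline{u}_\lambda^-$ and using the monotonicity of $A$ together with $|\xi(z)|\geq 0$ forces $\overline{u}_\lambda\geq 0$, so $\overline{u}_\lambda$ is a nontrivial nonnegative weak solution of \eqref{eq48}. Anisotropic $L^\infty$- and $C^1$-regularity (Fan--Zhao \cite{6Fan-Zha} and Fan \cite{5Fan}) yield $\overline{u}_\lambda\in C_+\setminus\{0\}$, and since $\Delta_{p(z)}\overline{u}_\lambda\leq|\xi(z)|\overline{u}_\lambda^{p(z)-1}$ a.e., Zhang's anisotropic maximum principle \cite{21Zha} places $\overline{u}_\lambda\in\text{int}\,C_+$.

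For uniqueness, suppose $u,v\in\text{int}\,C_+$ both solve \eqref{eq48}. Because $u,v\in\text{int}\,C_+$, the ratios $u/v$ and $v/u$ are in $L^\infty(\Omega)$ (both functions are comparable to $d(z,\partial\Omega)$ near $\partial\Omega$), which legitimizes $u^{p(z)}/v^{p(z)-1}$ and $v^{p(z)}/u^{p(z)-1}$ as test functions in $W^{1,p(z)}_0(\Omega)$. Combining a variable-exponent Picone-type inequality with the two equations and cancelling the $|\xi|$-terms yields, after symmetrization in $u\leftrightarrow v$,
$$\int_\Omega\bigl(u^{p(z)}-v^{p(z)}\bigr)\bigl(u^{q(z)-p(z)}-v^{q(z)-p(z)}\bigr)\,dz\;\geq\;0.$$
Since $q(z)-p(z)<0$ everywhere, the map $t\mapsto t^{q(z)-p(z)}$ is strictly decreasing, so at every $z\in\Omega$ the integrand is $\leq 0$, with strict inequality wherever $u(z)\neq v(z)$. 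Hence the integrand vanishes a.e.\ and $u=v$, completing the proof.

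The main obstacle is this uniqueness step: the classical Picone identity $|Du|^p\geq|Dv|^{p-2}Dv\cdot D(u^p/v^{p-1})$ is a pointwise convexity estimate in the constant-exponent case, but when $p=p(z)$ varies, differentiating $u^{p(z)}/v^{p(z)-1}$ produces additional terms involving $Dp(z)$ which must be controlled (using $p\in C^1(\overline{\Omega})$) and absorbed in the integration by parts. An alternative is to sidestep Picone entirely by showing that $\psi_\lambda$ is strictly convex along the curve $t\mapsto\bigl((1-t)u^{p_+}+tv^{p_+}\bigr)^{1/p_+}$, using $|\xi|\geq 0$ and the strict concavity of $s\mapsto s^{q(z)/p_+}$, so that a unique minimizer forces $u=v$. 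In either route, the essential ingredient is the strict monotonicity encoded in $q_+<p_-$.
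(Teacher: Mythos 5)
Your existence argument coincides with the paper's: the same functional (the paper calls it $\gamma_\lambda$), coercivity from $q_+<p_-$, weak lower semicontinuity, $\gamma_\lambda(t\hat u)<0$ for small $t$ to rule out the trivial minimizer, the test function $-\overline{u}_\lambda^-$ for nonnegativity, and then Fan--Zhao/Fan regularity plus Zhang's maximum principle. No issues there.

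The uniqueness step is where there is a genuine gap, and it is exactly the one you flag yourself without closing. A Picone-type inequality with the \emph{variable} powers $u^{p(z)}/v^{p(z)-1}$ is not available: differentiating these test functions produces terms proportional to $Dp(z)\,\ln u$, etc., which have no sign and cannot simply be ``absorbed in the integration by parts''; the pointwise convexity identity underlying Picone is genuinely a fixed-exponent statement. Your fallback (hidden convexity along $t\mapsto((1-t)u^{\sigma}+tv^{\sigma})^{1/\sigma}$) is the right idea but with the wrong constant: the inequality $|Dw_t|^{p(z)}\leq(1-t)|Du|^{p(z)}+t|Dv|^{p(z)}$ requires $\sigma\leq p(z)$ pointwise, i.e.\ $\sigma\leq p_-$, and likewise $s\mapsto s^{p(z)/\sigma}$ in the $|\xi|$-term is convex only when $\sigma\leq p(z)$; your choice $\sigma=p_+$ fails wherever $p(z)<p_+$. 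The paper resolves this by invoking Theorem 2.5 of Taka\v c--Giacomoni, the $p(x)$-extension of the D\'iaz--Sa\'a inequality, formulated with the \emph{constant} exponent $p_-$ in the quotients $(-\Delta_{p(z)}u)/u^{p_- -1}$ and the difference $u^{p_-}-v^{p_-}$ (the boundedness of $u/v$ and $v/u$ needed for this is supplied by Proposition 4.1.22 of \cite{17Pap-Rad-Rep}). With $\sigma=p_-$ the two equations give the integrand
$$\Bigl[\lambda\bigl(u^{q(z)-p_-}-v^{q(z)-p_-}\bigr)-|\xi(z)|\bigl(u^{p(z)-p_-}-v^{p(z)-p_-}\bigr)\Bigr]\bigl(u^{p_-}-v^{p_-}\bigr),$$
and the $|\xi|$-term no longer cancels but contributes with the correct sign because $p(z)-p_-\geq0$ makes $t\mapsto t^{p(z)-p_-}$ nondecreasing, while $q(z)-p_-<0$ makes the first factor strictly decreasing; this forces $u=v$. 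So your strategy is repairable, but as written the key inequality is asserted rather than proved, and the proposed repair uses an exponent for which the convexity fails.
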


\begin{proof}
We consider the $C^1$-functional $\gamma_\lambda: W^{1,p(z)}_0(\Omega)\to\RR$ defined by
$$
\gamma_\lambda(u)=\int_\Omega \frac{1}{p(z)}|Du|^{p(z)}dz+\int_\Omega \frac{|\xi(z)|}{p(z)}|u|^{p(z)}dz-\lambda\int_\Omega \frac{1}{q(z)}(u^+)^{q(z)}dz
$$
for all $u\in W^{1,p(z)}_0(\Omega)$.

Evidently, $\gamma_\lambda(\cdot)$ is coercive (since $q_+<p_-$) and sequentially weakly lower semicontinuous. So, we can find $\overline{u}_\lambda\in W^{1,p(z)}_0(\Omega)$ such that
\begin{eqnarray*} 
  && \gamma_\lambda(\overline{u}_\lambda)=\min\left\{\gamma_\lambda(u):\:u\in W^{1,p(z)}_0(\Omega)\right\}<0=\gamma_\lambda(0) \mbox{ (since $q_+<p_-$), } \\
  &\Rightarrow& \overline{u}_\lambda\not=0.
\end{eqnarray*}

We have
$$
\gamma'_\lambda(\overline{u}_\lambda)=0,
$$
\begin{equation}\label{eq49}
  \Rightarrow\langle A(\overline{u}_\lambda),h\rangle+\int_\Omega |\xi(z)| |\overline{u}_\lambda|^{p(z)-2}\overline{u}_\lambda hdz=\lambda\int_\Omega (\overline{u}_\lambda^+)^{q(z)-1}hdz
\end{equation}
for all $h\in W^{1,p(z)}_0(\Omega)$.

In \eqref{eq49} we choose $h=-\overline{u}_\lambda^-\in W^{1,p(z)}_0(\Omega)$. Then
\begin{eqnarray*} 
   && \hat{\rho}(D\overline{u}_\lambda^-)+\int_\Omega |\xi(z)| (\overline{u}_\lambda^-)^{p(z)}dz=0, \\
   &\Rightarrow& \overline{u}_\lambda\geq 0,\;\overline{u}_\lambda\not=0, \\
   &\Rightarrow& \overline{u}_\lambda \mbox{ is a positive solution of \eqref{eq48} (see \eqref{eq49}), } \\
   &\Rightarrow& \overline{u}_\lambda\in C_+\setminus\{0\} \mbox{ (anisotropic regularity theory). }
\end{eqnarray*}

Therefore
\begin{eqnarray*} 
  && \Delta_{p(z)}\overline{u}_\lambda(z)\leq \|\xi\|_\infty \overline{u}_\lambda(z)^{p(z)-1} \mbox{ for a.a. }z\in \Omega, \\
  &\Rightarrow& \overline{u}_\lambda\in{\rm int}\,C_+ \mbox{ (see Zhang \cite{21Zha}). }
\end{eqnarray*}

Next, we show that this positive solution of \eqref{eq48} in unique.

Suppose that $\overline{v}_\lambda$ is another positive solution of \eqref{eq48}. Again we have $\overline{v}_\lambda\in{\rm int}\,C_+$. On account of Proposition 4.1.22 of Papageorgiou, R\u adulescu and Repov\v s \cite[p. 274]{17Pap-Rad-Rep}, we have $\frac{\overline{u}_\lambda}{\overline{v}_\lambda},\frac{\overline{v}_\lambda}{\overline{u}_\lambda}\in L^\infty(\Omega)$. So, we can apply Theorem 2.5 of Taka\v c and Giacomoni \cite{19Tak-Gia} and have
\begin{eqnarray*} 
  0 &\leq& \int_\Omega \left[\frac{-\Delta_{p(z)}\overline{u}_\lambda}{\overline{u}_\lambda^{p_- -1}}+\frac{-\Delta_{p(z)}\overline{v}_\lambda}{\overline{v}_\lambda^{p_- -1}}\right](\overline{u}_\lambda^{p_-}-\overline{v}_\lambda^{p_-})dz \\
  &=& \int_\Omega \left[\lambda\left(\overline{u}_\lambda^{q(z)-p_-}-\overline{v}_\lambda^{q(z)-p_-}\right)-|\xi(z)|
\left(\overline{u}_\lambda^{p(z)-p_-}-\overline{v}_\lambda^{p(z)-p_-}\right)\right](\overline{u}_\lambda^{p_-}-
\overline{v}_\lambda^{p_-})dz, \\
  \Rightarrow \overline{u}_\lambda &=& \overline{v}_\lambda \mbox{ (since $q_+<p_-\leq p(z)$). }
\end{eqnarray*}

Therefore the positive solution $\overline{u}_\lambda\in{\rm int}\,C_+$ of problem \eqref{eq48} is unique.
\end{proof}

This solution $\overline{u}_\lambda\in{\rm int}\,C_+$ provides a lower bound for the solution set $S_\lambda$.

\begin{prop}\label{prop11}
  If hypotheses $H_0$, $H_1$ hold and $\lambda\in\mathcal{L}$, then $\overline{u}_\lambda\leq u$ for all $u\in S_\lambda$.
\end{prop}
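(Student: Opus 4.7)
The plan is to compare an arbitrary $u \in S_\lambda \subseteq \mathrm{int}\,C_+$ with $\overline{u}_\lambda$ by constructing, via a truncation and direct minimization, a positive solution of the auxiliary problem \eqref{eq48} that lies in $[0,u]$, and then invoking the uniqueness in Proposition \ref{prop10}. Concretely, I would introduce the Carath\'eodory function
\begin{equation*}
\ell_\lambda(z, x) = \begin{cases} 0, & x \leq 0, \\ \lambda x^{q(z)-1}, & 0 < x \leq u(z), \\ \lambda u(z)^{q(z)-1}, & u(z) < x, \end{cases}
\end{equation*}
with primitive $L_\lambda(z, x) = \int_0^x \ell_\lambda(z, s)\, ds$, and the associated $C^1$-functional
\begin{equation*}
\sigma_\lambda(v) = \int_\Omega \frac{1}{p(z)} |Dv|^{p(z)}\, dz + \int_\Omega \frac{|\xi(z)|}{p(z)} |v|^{p(z)}\, dz - \int_\Omega L_\lambda(z, v)\, dz.
\end{equation*}
Because $L_\lambda(z,v)$ grows at most linearly in $v$ (the cap is at $u(z) \leq \|u\|_\infty$), while the first two summands control $\|v\|^{p_-}$ via Proposition \ref{prop2}(c) and the Poincar\'e inequality, $\sigma_\lambda$ is coercive. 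It is also sequentially weakly lower semicontinuous, so Weierstrass-Tonelli yields a minimizer $\widetilde{u}_\lambda \in W^{1,p(z)}_0(\Omega)$.

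Next, I would check that $\widetilde{u}_\lambda$ is a nontrivial element of $[0, u]$. Nontriviality follows by evaluating $\sigma_\lambda(tv)$ for $v \in \mathrm{int}\, C_+$ with $v \leq u$ and $t > 0$ small: because $q_+ < p_-$, the concave term dominates and $\sigma_\lambda(tv) < 0 = \sigma_\lambda(0)$. Non-negativity $\widetilde{u}_\lambda \geq 0$ is then obtained by testing the Euler equation $\sigma_\lambda'(\widetilde{u}_\lambda)=0$ with $-\widetilde{u}_\lambda^-$, using $\ell_\lambda(z,\cdot)|_{\mathbb{R}_-}\equiv 0$ and $|\xi(z)|\geq 0$. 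To reach $\widetilde{u}_\lambda \leq u$, I would test with $(\widetilde{u}_\lambda - u)^+ \in W^{1,p(z)}_0(\Omega)$; on the set $\{\widetilde{u}_\lambda > u\}$ the truncation is active and $\ell_\lambda(z,\widetilde{u}_\lambda)=\lambda u^{q(z)-1}$. Subtracting the resulting identity from the weak form of $u \in S_\lambda$ tested against the same function, and using $f(z,u) \geq 0$, produces
\begin{equation*}
\langle A(\widetilde{u}_\lambda) - A(u), (\widetilde{u}_\lambda - u)^+ \rangle + \int_\Omega \bigl[|\xi(z)| \widetilde{u}_\lambda^{p(z)-1} - \xi(z) u^{p(z)-1}\bigr] (\widetilde{u}_\lambda - u)^+ dz \leq 0.
\end{equation*}

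The main obstacle is to make the potential term nonnegative despite the sign-changing $\xi$. A pointwise case analysis on $\{\widetilde{u}_\lambda > u\}$ handles this neatly: if $\xi(z) \geq 0$ the bracket equals $\xi(z)(\widetilde{u}_\lambda^{p(z)-1} - u^{p(z)-1}) \geq 0$, while if $\xi(z) < 0$ it equals $|\xi(z)|(\widetilde{u}_\lambda^{p(z)-1} + u^{p(z)-1}) \geq 0$. Combined with the monotonicity of $A$ (Proposition \ref{prop3}), both summands on the left of the displayed inequality are nonnegative, so each vanishes. The strict monotonicity of $A$ then forces $D(\widetilde{u}_\lambda - u)^+ = 0$ a.e., and the Poincar\'e inequality yields $(\widetilde{u}_\lambda - u)^+ = 0$, i.e., $\widetilde{u}_\lambda \leq u$. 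Having $0 \leq \widetilde{u}_\lambda \leq u$, the cap in $\ell_\lambda$ is never activated, so $\widetilde{u}_\lambda$ is in fact a positive weak solution of \eqref{eq48}. Proposition \ref{prop10} yields $\widetilde{u}_\lambda = \overline{u}_\lambda$, and hence $\overline{u}_\lambda \leq u$, as claimed.
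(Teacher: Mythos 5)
Your proposal is correct and follows essentially the same route as the paper: truncate the concave term at $u$, minimize the associated energy with the coercive potential $|\xi|$, test the Euler equation with $-\widetilde{u}_\lambda^-$ and with $(\widetilde{u}_\lambda-u)^+$ to locate the minimizer in $[0,u]\setminus\{0\}$, and then invoke the uniqueness assertion of Proposition \ref{prop10}. The only cosmetic difference is in the bookkeeping of the sign of $\xi$ when comparing the two weak equations: the paper first bounds $\xi \leq |\xi|$ in the equation satisfied by $u\in S_\lambda$, while you retain the mixed $|\xi|/\xi$ combination and dispose of it by a pointwise case split on the sign of $\xi$; both versions close the argument the same way via the strict monotonicity of $A$.
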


\begin{proof}
  Let $u\in S_\lambda\subseteq{\rm int}\,C_+$ and consider the Carath\'eodory function $\beta_\lambda(z,x)$ defined by
\begin{equation}\label{eq50}
  \beta_\lambda(z,x)=\left\{
                       \begin{array}{ll}
                         \lambda(x^+)^{q(z)-1}, & \hbox{ if }x\leq u(z) \\
                         \lambda u(z)^{q(z)-1}, & \hbox{ if } u(z)<x.
                       \end{array}
                     \right.
\end{equation}

We set $B_\lambda(z,x)=\displaystyle{\int_0^x \beta_\lambda(z,s)ds}$ and consider the $C^1$-functional $\tau_\lambda:W^{1,p(z)}_0(\Omega)\to\RR$ defined by
$$
\tau_\lambda(u)=\int_\Omega \frac{1}{p(z)}|Du|^{p(z)}dz+\int_\Omega \frac{|\xi(z)|}{p(z)}|u|^{p(z)}dz-\int_\Omega B_\lambda(z,u)dz
$$
for all $u\in W^{1,p(z)}_0(\Omega)$.

From \eqref{eq50} we see that $\tau_\lambda(\cdot)$ is coercive. Also, it is sequentially weakly lower semicontinuous. So, we can find $\tilde{u}_\lambda\in W^{1,p(z)}_0(\Omega)$ such that
\begin{eqnarray*} 
  && \tau_\lambda(\tilde{u}_\lambda)=\min\left\{\tau_\lambda(u):\: u\in W^{1,p(z)}_0(\Omega)\right\}<0=\tau_\lambda(0) \mbox{ (since $q_+<p_-$), } \\
  &\Rightarrow& \tilde{u}_\lambda\not=0.
\end{eqnarray*}

We have
$$
\tau'_\lambda(\tilde{u})=0,
$$
\begin{equation}\label{eq51}
  \Rightarrow\langle A(\tilde{u}_\lambda),h\rangle+\int_\Omega|\xi(z)| |\tilde{u}_\lambda|^{p(z)-2}\tilde{u}_\lambda hdz=\int_\Omega\beta_\lambda(z, \tilde{u}_\lambda)hdz
\end{equation}
for all $h\in W^{1,p(z)}_0(\Omega)$.

In \eqref{eq51} first we choose $h=-\tilde{u}_\lambda^-\in W^{1,p(z)}_0(\Omega)$ and infer that
$$
\tilde{u}_\lambda\geq0,\;\tilde{u}_\lambda\not=0.
$$

Next, in \eqref{eq51} we choose $h=(\tilde{u}_\lambda-u)^+\in W^{1,p(z)}_0(\Omega)$. We have
\begin{eqnarray*} 
   && \langle A(\tilde{u}_\lambda),(\tilde{u}_\lambda-u)^+\rangle+\int_\Omega |\xi(z)|\tilde{u}_\lambda^{p(z)-1}(\tilde{u}_\lambda-u)^+dz \\
   &=& \int_\Omega \lambda u^{q(z)-1}(\tilde{u}_\lambda-u)^+dz \mbox{ (see \eqref{eq50}) } \\
   &\leq& \int_\Omega \left[\lambda u^{q(z)-1}+f(z,u)\right](\tilde{u}_\lambda-u)^+dz \mbox{ (since $f\geq0$) } \\
   &\leq& \langle A(u),(\tilde{u}_\lambda-u)^+\rangle+\int_\Omega |\xi(z)|u^{p(z)-1}(\tilde{u}_\lambda-u)^+dz \mbox{ (since $u\in S_\lambda$), }\\
   \Rightarrow \tilde{u}_\lambda&\leq& u.
\end{eqnarray*}

So, we have proved that
\begin{equation}\label{eq52}
  \tilde{u}_\lambda\in[0,u]\setminus\{0\}.
\end{equation}

Then from \eqref{eq51}, \eqref{eq52} and \eqref{eq50} it follows that
\begin{eqnarray*} 
  && \tilde{u}_\lambda \mbox{ is a positive solution of \eqref{eq48}, } \\
  &\Rightarrow& \tilde{u}_\lambda=\overline{u}_\lambda\in{\rm int}\,C_+ \mbox{ (see Proposition \ref{prop10}), } \\
  &\Rightarrow& \overline{u}_\lambda\leq u \mbox{ for all }u\in S_\lambda.
\end{eqnarray*}
The proof is now complete.
\end{proof}

\begin{rem}\label{rem4}
  Reasoning as in the above proof, we show that $\lambda\mapsto \overline{u}_\lambda$ is increasing that is, if $0<\mu<\lambda$, then $\overline{u}_\lambda-\overline{u}_\mu\in C_+\setminus\{0\}$.
\end{rem}

We know that $S_\lambda$ is downward directed (see Filippakis and Papageorgiou \cite{7Fil-Pap} and Papageorgiou, R\u adulescu and Repov\v s \cite{16Pap-Rad-Rep} and recall that $A(\cdot)$ is monotone, see Proposition \ref{prop3}).
\begin{prop}\label{prop12}
  If hypotheses $H_0$, $H_1$ hold and $\lambda\in \mathcal{L}$, then there exists $u_\lambda^*\in S_\lambda\subseteq{\rm int}\,C_+$ such that
$$
u_\lambda^*\leq u \mbox{ for all }u\in S_\lambda
$$
$$
\mbox{ (minimal positive solution of $(P_\lambda)$). }
$$
\end{prop}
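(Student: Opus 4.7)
My plan is to exploit the fact that $S_\lambda$ is downward directed together with the uniform lower bound $\overline{u}_\lambda$ from Proposition~\ref{prop11}, to extract a minimizing sequence that converges to the desired minimal solution.

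First I would invoke a standard result (see Hu--Papageorgiou, or the references \cite{7Fil-Pap} and \cite{16Pap-Rad-Rep} cited just before the proposition) which says that any downward directed family in $W^{1,p(z)}_0(\Omega)$ admits a decreasing sequence $\{u_n\}_{n\geq 1}\subseteq S_\lambda$ with $\inf_{n\geq 1}u_n=\inf S_\lambda$ pointwise a.e. on $\Omega$. By Proposition~\ref{prop11} we then have
\[
\overline{u}_\lambda \leq u_n \leq u_1 \quad \text{for all } n\in\NN,
\]
so in particular $\{u_n\}_{n\geq 1}$ is uniformly bounded in $L^\infty(\Omega)$ by $\|u_1\|_\infty$.

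Next I would establish boundedness of $\{u_n\}_{n\geq 1}$ in $W^{1,p(z)}_0(\Omega)$. Since each $u_n\in S_\lambda$ satisfies
\[
\langle A(u_n),h\rangle+\int_\Omega \xi(z)u_n^{p(z)-1}h\,dz=\lambda\int_\Omega u_n^{q(z)-1}h\,dz+\int_\Omega f(z,u_n)h\,dz
\]
for all $h\in W^{1,p(z)}_0(\Omega)$, I would test with $h=u_n$. The uniform $L^\infty$ bound controls the right-hand side and the indefinite potential term, and Proposition~\ref{prop2} then yields a uniform bound on $\|u_n\|$. Passing to a subsequence, I may assume $u_n\overset{w}{\to}u_\lambda^*$ in $W^{1,p(z)}_0(\Omega)$ and $u_n\to u_\lambda^*$ in $L^{r(z)}(\Omega)$, with $\overline{u}_\lambda\leq u_\lambda^*\leq u_1$.

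Testing the equation for $u_n$ with $h=u_n-u_\lambda^*$ and passing to the limit (using compact embedding to handle the lower order terms and the $L^\infty$ bound on $f(z,u_n)$), I obtain $\limsup_{n\to\infty}\langle A(u_n),u_n-u_\lambda^*\rangle\leq 0$. The $(S)_+$ property of $A$ (Proposition~\ref{prop3}) then yields $u_n\to u_\lambda^*$ strongly in $W^{1,p(z)}_0(\Omega)$, and passing to the limit in the equation shows $u_\lambda^*$ solves $(P_\lambda)$. Since $u_\lambda^*\geq\overline{u}_\lambda>0$, we have $u_\lambda^*\neq 0$, hence $u_\lambda^*\in S_\lambda\subseteq\mathrm{int}\,C_+$, and by construction $u_\lambda^*\leq u$ for all $u\in S_\lambda$.

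The main technical obstacle is the first step: producing a decreasing sequence $\{u_n\}\subseteq S_\lambda$ whose pointwise infimum equals $\inf S_\lambda$. This relies on the downward-directedness of $S_\lambda$ together with the separability of $W^{1,p(z)}_0(\Omega)$, and is the standard lemma on infima of downward directed sets in ordered Banach spaces; once it is in place, the rest of the argument is routine in view of the $L^\infty$-sandwich $\overline{u}_\lambda\leq u_n\leq u_1$ and the $(S)_+$ property of $A$.
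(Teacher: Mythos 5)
Your proposal is correct and follows essentially the same route as the paper: extract a decreasing sequence from the downward directed set $S_\lambda$ with infimum equal to $\inf S_\lambda$ (the paper cites Lemma 3.10 of Hu--Papageorgiou for exactly this), use the sandwich $\overline{u}_\lambda\leq u_n\leq u_1$ to get boundedness, and pass to the limit via the $(S)_+$ property of $A$. Your slightly more explicit justification of the $W^{1,p(z)}_0(\Omega)$-bound (testing with $h=u_n$ and using the uniform $L^\infty$ bound) is a valid unpacking of the paper's appeal to hypothesis $H_1(i)$.
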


\begin{proof}
   By Lemma 3.10 of Hu and Papageorgiou \cite[p. 178]{12Hu-Pap}, we know that we can find $\{u_n\}_{n\geq1}\subseteq S_\lambda\subseteq{\rm int}\,C_+$ decreasing (recall that $S_\lambda$ is downward directed) such that
$$
\inf_{n\geq1} u_n=\inf S_\lambda.
$$

Since $\overline{u}_\lambda\leq u_n\leq u_1$ for all $n\in\NN$ (see Proposition \ref{prop11}), from hypothesis $H_1(i)$ it follows that
$$
\{u_n\}_{n\geq1}\subseteq W^{1,p(z)}_0(\Omega) \mbox{ is bounded. }
$$

So, we may assume that
\begin{equation}\label{eq53}
  u_n\overset{w}{\to} u_\lambda^* \mbox{ in }W^{1,p(z)}_0(\Omega) \mbox{ and } u_n\to u_\lambda^* \mbox{ in }L^{r(z)}(\Omega) \mbox{ as }n\to\infty.
\end{equation}

We have
\begin{eqnarray}\nonumber 
   && \langle A(u_n),u_n-u_\lambda^*\rangle+\int_\Omega \xi(z)u_n^{p(z)-1}(u_n-u_\lambda^*)dz \\ \nonumber
  &=& \lambda\int_\Omega u_n^{q(z)-1}(u_n-u_\lambda^*)dz+\int_\Omega f(z,u_n)(u_n-u_\lambda^*)dz, \\ \nonumber
  &\Rightarrow& \lim_{n\to\infty}\langle A(u_n),u_n-u_\lambda^*\rangle=0, \\
  &\Rightarrow& u_n\to u_\lambda^* \mbox{ in } W^{1,p(z)}_0(\Omega) \mbox{ (see Proposition \ref{prop3}).} \label{eq54}
\end{eqnarray}

Note that
$$
\overline{u}_\lambda\leq u_\lambda^* \mbox{ and so }u_\lambda^*\not=0,
$$
\begin{eqnarray*} 
  && \langle A(u_\lambda^*),h\rangle+\int_\Omega \xi(z)(u_\lambda^*)^{p(z)-1}hdz=\lambda\int_\Omega (u_\lambda^*)^{q(z)-1}hdz +\int_\Omega f(z,u_\lambda^*)hdz \\
  && \mbox{ for all } h\in W^{1,p(z)}_0(\Omega) \mbox{ (see \eqref{eq54}). }
\end{eqnarray*}

It follows that
$$
u_\lambda^*\in S_\lambda\subseteq {\rm int}\,C_+ \mbox{ and }u_\lambda^*=\inf S_\lambda.
$$
The proof is now complete.
\end{proof}

We set $\lambda^*=\sup \mathcal{L}$.

\begin{prop}\label{prop13}
  If hypotheses $H_0$, $H_2$ hold, then $\lambda^*<\infty$.
\end{prop}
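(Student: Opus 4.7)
The plan is to argue by contradiction. Suppose there is a sequence $\lambda_n\to+\infty$ with $\lambda_n\in\mathcal{L}$, and pick $u_n\in S_{\lambda_n}\subseteq\mathrm{int}\,C_+$. The strategy has two parts: derive a uniform pointwise lower bound on the reaction, then produce a contradiction via an eigenvalue-type comparison.

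First I would establish the pointwise estimate by combining $H_1(i)$ (so $f\ge 0$ on $[0,+\infty)$) with the $p_+$-superlinearity in $H_1(ii)$ and the monotonicity of $x\mapsto f(z,x)/x^{p_+-1}$ on $[\hat M,+\infty)$ from Remark \ref{rem2}. A short computation integrating the monotone ratio against $F(z,x)/x^{p_+}\to+\infty$ yields that $f(z,x)/x^{p_+-1}\to+\infty$ uniformly in $z\in\Omega$ as $x\to+\infty$. Using this together with $q_+<p_-$, I would prove that for every $\eta>0$ there exists $\lambda_\eta>0$ such that
\begin{equation*}
\lambda\,x^{q(z)-1}+f(z,x)\ \ge\ \eta\,x^{p(z)-1}\quad\text{for a.a.}\ z\in\Omega,\ \text{all}\ x>0,\ \text{all}\ \lambda\ge\lambda_\eta.
\end{equation*}
To see this, pick $x_\eta\ge 1$ large enough that $f(z,x)\ge\eta\,x^{p_+-1}\ge\eta\,x^{p(z)-1}$ for $x\ge x_\eta$ (using $x^{p_+-1}\ge x^{p(z)-1}$ when $x\ge 1$). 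On the complementary range $0<x\le x_\eta$, the desired estimate reduces to $\lambda\ge\eta\,x^{p(z)-q(z)}$, which follows from $x^{p(z)-q(z)}\le x_\eta^{p_+-q_-}$ once $\lambda\ge\eta\,x_\eta^{p_+-q_-}=:\lambda_\eta$.

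Substituting into the equation satisfied by $u_n\in S_{\lambda_n}$ (choosing $\eta_n\to+\infty$ so that $\lambda_n\ge\lambda_{\eta_n}$), we obtain the differential inequality
\begin{equation*}
-\Delta_{p(z)}u_n\ \ge\ (\eta_n-\|\xi\|_\infty)\,u_n^{p(z)-1}\quad\text{in}\ \Omega,\qquad u_n\in W^{1,p(z)}_0(\Omega),\ u_n>0,
\end{equation*}
with $c_n:=\eta_n-\|\xi\|_\infty\to+\infty$. To close the argument I would compare $u_n$ with the first Dirichlet eigenfunction $\hat\phi_1\in\mathrm{int}\,C_+$ of $-\Delta_{p(z)}$ (with eigenvalue $\hat\lambda_1>0$; see Fan \cite{5Fan}) via an anisotropic Picone-type inequality. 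This forces $c_n\le\hat\lambda_1$ eventually, contradicting $c_n\to+\infty$ and showing that $\mathcal{L}$ must be bounded above.

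The main obstacle is this last step. In variable-exponent Sobolev spaces the modular Rayleigh quotient is not bounded below by a positive constant in general, so the classical Picone identity does not give $c\le\hat\lambda_1$ for free; one must invoke a refined anisotropic Picone-type inequality and control the correction term involving $Dp(z)$ using $p\in C^1(\overline{\Omega})$. A cleaner alternative, staying entirely within the tools developed in the paper, is to show directly that the minimal auxiliary solution $\overline{u}_\lambda$ of \eqref{eq48} blows up on compact subsets, i.e.\ $\inf_{\Omega_0}\overline{u}_\lambda\to+\infty$ for any $\Omega_0\subset\subset\Omega$ as $\lambda\to+\infty$. This can be done by constructing subsolutions from large truncated constants supported near $\Omega_0$ and invoking the uniqueness/monotonicity from Proposition \ref{prop10}. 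Combined with $u_n\ge\overline{u}_{\lambda_n}$ (Proposition \ref{prop11}) and the uniform superlinearity of $f$ applied on $\Omega_0$, one reaches the desired contradiction without relying on an anisotropic eigenvalue argument.
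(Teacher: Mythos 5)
Your pointwise estimate in the first half is sound and is essentially the same estimate the paper uses (its equation \eqref{eq55}): both exploit the fact that $f\geq 0$ is $(p_+-1)$-superlinear at $+\infty$ and that $q_+<p_-$ forces the concave term to dominate near $0$, so for $\lambda$ large one has $\lambda x^{q(z)-1}+f(z,x)\geq \eta\, x^{p(z)-1}$ on all of $[0,\infty)$. The divergence is in how you convert this into a contradiction, and here there is a genuine gap. As you yourself flag, the anisotropic Picone/Rayleigh-quotient route does not work: for nonconstant $p(\cdot)$ the modular quotient $\hat\rho(Du)/\rho(u)$ can have infimum $0$, there is in general no positive principal eigenvalue $\hat\lambda_1$ for $-\Delta_{p(z)}$, and the loss of homogeneity blocks the usual ``multiply by $u_n$ and integrate'' step from producing a clean $c_n\leq\hat\lambda_1$ bound. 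So the line ``this forces $c_n\leq\hat\lambda_1$'' is not justified and cannot be salvaged without imposing extra structural assumptions on $p(\cdot)$ that the paper does not make. Your proposed alternative --- proving that $\inf_{\Omega_0}\overline u_\lambda\to+\infty$ as $\lambda\to+\infty$ and then combining with $u\geq\overline u_\lambda$ --- is plausible (it is true in the constant-exponent case by scaling) but is only a sketch: you would still need to construct the blow-up of $\overline u_\lambda$ in the variable-exponent setting, where scaling arguments are unavailable, and then say precisely what contradiction is reached, which is not done.

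The paper sidesteps all of this by a purely local comparison. Having fixed $\hat\lambda$ so that \eqref{eq55} holds, it supposes $\lambda>\hat\lambda$ admissible with $u_\lambda\in S_\lambda\subseteq \mathrm{int}\,C_+$, takes a smooth subdomain $\Omega_0\subset\subset\Omega$, sets $m_0=\min_{\overline\Omega_0}u_\lambda>0$, and compares $u_\lambda$ with the constant $m_0^\delta=m_0+\delta$ on $\Omega_0$. Using \eqref{eq55}, the one-sided H\"older condition $H_2(v)$, and the extra term $(\lambda-\hat\lambda)m_0^{q(z)-1}>0$, it shows that the two right-hand sides differ by a uniformly positive amount, so the strong comparison principle (Proposition \ref{prop5}) gives $u_\lambda-m_0^\delta\in D_+$, i.e.\ $u_\lambda>m_0+\delta$ on $\Omega_0$, contradicting $\min_{\overline\Omega_0}u_\lambda=m_0$. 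This avoids any eigenvalue machinery, which is precisely what you would want given the pathologies of the variable-exponent spectrum; I recommend you replace the eigenvalue step with a comparison against a constant subsolution in the spirit of Proposition \ref{prop5}.

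Two smaller remarks. First, note that the paper's estimate \eqref{eq55} is stated with the coefficient $\xi(z)$, not $\|\xi\|_\infty$; this is fine since $f\geq 0$ handles the $\xi<0$ case automatically, and your version with $\eta$ on the right-hand side requires $\eta>\|\xi\|_\infty$ to be useful, which is achievable but should be said. Second, your argument never uses $H_2(v)$ explicitly; the paper does, to get the monotonicity $f(z,m_0)+\hat\xi_\rho m_0^{p(z)-1}\leq f(z,u_\lambda)+\hat\xi_\rho u_\lambda^{p(z)-1}$ needed for the comparison, which is one reason the proposition is stated under $H_2$ rather than $H_1$.
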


\begin{proof}
  On account of hypotheses $H_0$, $H_2(iv)$ and since $q_+<p_-$, we see that we can find $\hat{\lambda}>0$ such that
\begin{equation}\label{eq55}
  \hat{\lambda}x^{q(z)-1}+f(z,x)-\xi(z)x^{p(z)-1}\geq0 \mbox{ for a.a. }z\in\Omega, \mbox{ all }x\geq0.
\end{equation}

Let $\lambda>\hat{\lambda}$ and suppose that $\lambda\in \mathcal{L}$. Then we can find $u_\lambda\in S_\lambda\subseteq{\rm int}\,C_+$. Let $\Omega_0\subset\subset \Omega$ (that is, $\Omega_0\subseteq\overline{\Omega}_0\subseteq\Omega$) and assume that $\partial\Omega_0$  is a $C^2$-manifold. We set $\displaystyle{m_0=\min_{\overline{\Omega}_0} u_\lambda>0}$ (recall that $u_\lambda\in{\rm int}\,C_+$). Also, let $\hat{\xi}_\rho>\|\xi\|_\infty$. Let $m_0^\delta=m_0+\delta$ for $\delta>0$ small. We have
\begin{eqnarray}\nonumber 
  && -\Delta_{p(z)}m_0^\delta+[\xi(z)+\hat{\xi}_\rho](m_0^\delta)^{p(z)-1} \\ \nonumber
  &\leq& [\xi(z)+\hat{\xi}_\rho]m_0^{p(z)-1}+\chi(\delta) \mbox{ with }\chi(\delta)\to0^+ \mbox{ as }\delta\to0^+\\ \nonumber
  &\leq& \hat{\lambda}m_0^{q(z)-1}+f(z,m_0)+\hat{\xi}_\rho m_0^{p(z)-1}+\chi(\delta) \mbox{ (see \eqref{eq55}) } \\ \nonumber
  &\leq& \hat{\lambda}u_\lambda^{q(z)-1}+f(z,u_\lambda)+\hat{\xi}_\rho u_\lambda^{p(z)-1}+\chi(\delta) \mbox{ (see hypothesis $H_2(iv)$) } \\ \nonumber
  &\leq& \lambda u_\lambda^{q(z)-1}+f(z,u_\lambda)+\hat{\xi}_\rho u_\lambda^{p(z)-1}-[\lambda-\hat{\lambda}]m_0^{p(z)-1}+\chi(\delta) \\
  &\leq& -\Delta_{p(z)} u_\lambda+[\xi(z)+\hat{\xi}_\rho] u_\lambda^{p(z)-1} \mbox{ in }\Omega_0 \mbox{ for }\delta\in(0,1) \mbox{ small}. \label{eq56}
\end{eqnarray}

Note that for $\delta\in(0,1)$ small, we have
$$
(\lambda-\hat{\lambda})m_0^{p(z)-1}-\chi(\delta)\geq\eta>0.
$$

Then from \eqref{eq56} and Proposition \ref{prop5}, we have
$$
u_\lambda-m_0^\delta\in D_+ \mbox{ for all }\delta\in(0,1) \mbox{ small, }
$$
a contradiction. This means that $0<\lambda^*\leq \hat{\lambda}<\infty$.
\end{proof}

According to this proposition, we have
\begin{equation}\label{eq57}
  (0,\lambda^*)\subseteq\mathcal{L}\subseteq(0,\lambda^*].
\end{equation}

We will show that for all $\lambda\in(0,\lambda^*)$, we have at least two positive smooth solutions for problem $(P_\lambda)$. To do this we need to strengthen a little the hypotheses on $f(z,\cdot)$. The new conditions on $f(z,x)$ are the following:

\smallskip
$H_3:$ $f:\Omega\times\RR\to\RR$ is a function measurable in $z\in\Omega$, for a.a. $z\in\Omega$ $f(z,\cdot)\in C^1(\RR)$, hypotheses $H_3(i)\to(v)$ are the same as the corresponding hypotheses $H_2(i)\to(v)=H_1(i)\to(v)$ and
\begin{itemize}
  \item[(vi)] for every $m>0$, there exists $\eta_m>0$ such that
$$
f(z,x)\geq\eta_m>0 \mbox{ for a.a. }z\in\Omega, \mbox{ all }x\geq m.
$$
\end{itemize}

\begin{prop}\label{prop14}
  If hypotheses $H_0$, $H_3$ hold and $\lambda\in(0,\lambda^*)$, then problem $(P_\lambda)$ admits at least two positive solutions
$$
u_0,\hat{u}\in{\rm int}\,C_+,\;u_0\not=\hat{u}.
$$
\end{prop}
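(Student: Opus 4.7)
The plan is a standard truncation $+$ mountain pass argument adapted to anisotropic problems. Since $\lambda<\lambda^*=\sup\mathcal L$, one may fix $\mu\in(\lambda,\lambda^*)\cap\mathcal L$ and $u_\mu\in S_\mu\subseteq{\rm int}\,C_+$. Following verbatim the construction in Proposition \ref{prop8}, but with our $\lambda$ in the role of its $\mu$ and $u_\mu$ in the role of its $u_\lambda$, one obtains a truncated $C^1$-functional $\tilde\varphi_\lambda$ that is coercive and sequentially weakly lower semicontinuous, with $\tilde\varphi_\lambda(\tilde u)<0=\tilde\varphi_\lambda(0)$ for some small positive $\tilde u$ (using $q_+<p_-$). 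Its global minimizer $u_0$ is therefore nontrivial and, by the comparison step of Proposition \ref{prop8}, belongs to $[0,u_\mu]\cap S_\lambda\subseteq{\rm int}\,C_+$; this is the first positive solution.

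To make $u_0$ a local minimizer of the original functional $\hat\varphi_\lambda$, I would first upgrade $u_0\leq u_\mu$ to $u_\mu-u_0\in{\rm int}\,C_+$ via the strong comparison Proposition \ref{prop4}. Picking $\hat\xi_\rho>\|\xi\|_\infty$ with $\rho=\|u_\mu\|_\infty$ and using hypothesis $H_3(v)$ to absorb the sign of $f$, the two equations satisfied by $u_0$ and $u_\mu$, rewritten with the extra coercive term $\hat\xi_\rho\, u^{p(z)-1}$, have right-hand sides comparable by $(\mu-\lambda)u_\mu^{q(z)-1}\succ 0$, and Proposition \ref{prop4} yields $u_\mu-u_0\in{\rm int}\,C_+$. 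Hence there is a $C_0^1(\overline\Omega)$-neighborhood of $u_0$ inside $[0,u_\mu]\cap C_0^1(\overline\Omega)$ on which $\hat\varphi_\lambda\equiv\tilde\varphi_\lambda$, so $u_0$ is a local $C_0^1(\overline\Omega)$-minimizer of $\hat\varphi_\lambda$; the anisotropic $C_0^1$-versus-$W^{1,p(z)}_0$ local minimizer equivalence (the variable-exponent analogue of Garc\'{\i}a Azorero--Manfredi--Peral--Alonso, resting on Fan's $C^{1,\alpha}$-regularity for the $p(z)$-Laplacian) then promotes $u_0$ to a local $W^{1,p(z)}_0(\Omega)$-minimizer.

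The second solution would come from the mountain pass theorem for $\hat\varphi_\lambda$. We may assume $u_0$ is an isolated critical point (otherwise a sequence of nearby critical points already supplies $\hat u\not=u_0$), whence $\hat\varphi_\lambda(u_0)<m_\rho:=\inf\{\hat\varphi_\lambda(u):\|u-u_0\|=\rho\}$ for some $\rho>0$. The $p_+$-superlinearity from $H_3(ii)$ gives $\hat\varphi_\lambda(tv)\to-\infty$ as $t\to+\infty$ for any $v\in{\rm int}\,C_+$, Proposition \ref{prop6} supplies the $C$-condition, and the mountain pass theorem yields $\hat u\in K_{\hat\varphi_\lambda}$ with $\hat\varphi_\lambda(\hat u)\geq m_\rho>\hat\varphi_\lambda(u_0)$, so $\hat u\not=u_0$. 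To secure $\hat u\in{\rm int}\,C_+$ with $\hat u\not=0$, I would actually apply the mountain pass to $\hat\varphi_\lambda$ further truncated from below at $u_0$, so that every critical point of the new functional automatically satisfies $\hat u\geq u_0\in{\rm int}\,C_+$; hypothesis $H_3(vi)$ intervenes here to supply the strict positive lower bound on the reaction that is needed to invoke Proposition \ref{prop5} and obtain $\hat u-u_0\in D_+$, strictly separating $\hat u$ from $u_0$.

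The principal obstacle is the anisotropic $C_0^1$-vs-$W^{1,p(z)}_0$ local minimizer equivalence invoked in the second paragraph: while conceptually standard, it is delicate in the variable-exponent setting because the gradient regularity estimates for $p(z)$-Laplacian equations are weaker than in the constant-exponent case. A secondary technical point is synchronizing the two truncations (above by $u_\mu$, below by $u_0$) so that the mountain pass geometry, the $C$-condition, and the identification of critical points with actual solutions of $(P_\lambda)$ are all preserved.
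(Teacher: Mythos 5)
Your overall strategy (produce $u_0$ via a two-sided truncation and minimization, promote it to a local $W_0^{1,p(z)}$-minimizer via the $C_0^1$-vs-Sobolev equivalence, then run a mountain-pass argument) matches the paper's route. The first positive solution $u_0$ is obtained correctly, and the step $u_\mu-u_0\in{\rm int}\,C_+$ via Proposition~\ref{prop4} is indeed what Proposition~\ref{prop9} supplies. However, there is a real gap in the passage to the second solution.

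The gap is in the third paragraph, where you propose to apply the mountain pass theorem to $\hat\varphi_\lambda$ truncated from below at $u_0$ itself. For the mountain-pass geometry you need $u_0$ to be a local minimizer \emph{of that truncated functional}, and this does not follow from $u_0$ being a local minimizer of $\hat\varphi_\lambda$: the truncated functional equals $\hat\varphi_\lambda$ plus a constant only on $[u_0)$, and in every $C_0^1$-neighborhood of $u_0$ there are functions that dip below $u_0$ somewhere, so the two functionals are genuinely different near $u_0$. Verifying $u_0$ is still a local minimizer of the truncation-at-$u_0$ requires an extra argument (one can in fact push it through by enlarging $\vartheta$ so that $\vartheta\geq\hat\xi_\rho$ with $\rho=\|u_0\|_\infty$ and invoking $H_3(v)$ to show the truncated integrand dominates the original on $[0,u_0]$), but you do not supply it. The paper sidesteps this entirely by truncating at the \emph{strictly smaller} barrier $\overline u_\lambda$ from Propositions~\ref{prop10}--\ref{prop11}: hypothesis $H_3(vi)$ gives $0\prec f(\cdot,\overline u_\lambda(\cdot))$, which together with $H_3(v)$ and the strong comparison Proposition~\ref{prop4} yields $u_0-\overline u_\lambda\in{\rm int}\,C_+$, hence $u_0\in{\rm int}_{C_0^1(\overline\Omega)}[\overline u_\lambda,u_\eta]$. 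Since the two-sided truncation (at $\overline u_\lambda$ and $u_\eta$) and the one-sided truncation (at $\overline u_\lambda$) coincide on $[\overline u_\lambda,u_\eta]$, the local minimizing property transfers for free, the critical set of the one-sided functional lies in $[\overline u_\lambda)\cap{\rm int}\,C_+$ (so $0$ is automatically excluded), and the $C$-condition for the one-sided functional follows from Proposition~\ref{prop6} because it differs from $\hat\varphi_\lambda$ by a constant on $[\overline u_\lambda)$. Your proposal never introduces $\overline u_\lambda$, and this is the missing ingredient.

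A secondary misplacement: you invoke $H_3(vi)$ together with Proposition~\ref{prop5} to separate $\hat u$ from $u_0$ after the mountain pass. This is both unnecessary and does not work as stated, since $u_0$ and $\hat u$ satisfy the \emph{same} equation with the same $\lambda$, so the right-hand sides need not differ by a positive constant and Proposition~\ref{prop5} does not apply. The correct distinction $\hat u\not=u_0$ already comes from the energy levels $\hat\tau_\lambda(\hat u)\geq\hat m_\rho>\hat\tau_\lambda(u_0)$; the actual job of $H_3(vi)$ in the paper is earlier, to produce the separation $u_0-\overline u_\lambda\in{\rm int}\,C_+$ that makes the truncation argument close.
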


\begin{proof}
  Let $\eta\in(\lambda,\lambda^*)$. We have $\eta\in\mathcal{L}$ (see \eqref{eq57}) and so we can find $u_\eta\in S_\eta\subseteq{\rm int}\,C_+$. Then according to Proposition \ref{prop9}, we can find $u_0\in S_\lambda\subseteq{\rm int}\,C_+$ such that
\begin{equation}\label{eq58}
  u_\eta-u_0\in{\rm int}\,C_+.
\end{equation}

Recall that $\overline{u}_\lambda\leq u_0$ (see Proposition \ref{prop11}). Let $\rho=\|u_0\|_\infty$ and let $\hat{\xi}_\rho>0$ be as postulated by hypothesis $H_3(v)=H_2(v)$. We can assume that $\hat{\xi}_\rho>\|\xi\|_\infty$. Then we have
\begin{eqnarray}\nonumber 
   && -\Delta_{p(z)} \overline{u}_\lambda+[\xi(z)+\hat{\xi}_\rho]\overline{u}_\lambda^{p(z)-1} \\ \nonumber
   &\leq& -\Delta_{p(z)}\overline{u}_\lambda+[|\xi(z)|+\hat{\xi}_\rho]\overline{u}_\lambda^{p(z)-1} \\ \nonumber
   &=& \lambda \overline{u}_\lambda^{q(z)-1}+\hat{\xi}_\rho\overline{u}_\lambda^{p(z)-1} \mbox{ (see Proposition \ref{prop10}) } \\ \nonumber
   &\leq& \lambda u_0^{q(z)-1}+f(z,\overline{u}_\lambda)+\hat{\xi}_\rho \overline{u}_\lambda^{p(z)-1} \mbox{ (recall that $f\geq0$) } \\ \nonumber
   &\leq& \lambda u_0^{q(z)-1}+f(z,u_0)+\hat{\xi}u_0^{p(z)-1} \\ \nonumber
   && \mbox{ (see Proposition \ref{prop11} and hypothesis $H_3(v)=H_2(v)$) } \\
   &=& -\Delta_{p(z)} u_0 +[\xi(z)+\hat{\xi}_\rho]u_0^{p(z)-1} \mbox{ (since $u_0\in S_\lambda$).} \label{eq59}
\end{eqnarray}

On account of hypothesis $H_3(vi)$ and since $\overline{u}_\lambda\in{\rm int}\,C_+$, we see that
$$
0\prec f(\cdot,\overline{u}_\lambda(\cdot)).
$$

Then \eqref{eq59} and Proposition \ref{prop4} imply that
\begin{equation}\label{eq60}
  u_0-\overline{u}_\lambda\in{\rm int}\,C_+.
\end{equation}

From \eqref{eq58} and \eqref{eq60} it follows that
\begin{equation}\label{eq61}
u_0\in{\rm int}_{C_0^1(\overline{\Omega})}[\overline{u}_\lambda,u_\eta].
\end{equation}

As before, let $\vartheta>\|\xi\|_\infty$ and consider the Carath\'eodory function $k_\lambda(z,x)$ defined by
\begin{equation}\label{eq62}
  k_\lambda(z,x)=\left\{
                   \begin{array}{ll}
                     \lambda \overline{u}_\lambda(z)^{q(z)-1}+f(z,\overline{u}_\lambda(z))+\vartheta\overline{u}_\lambda(z)^{p(z)-1}, & \hbox{ if }x<\overline{u}_\lambda(z) \\
                     \lambda x^{q(z)-1}+f(z,x)+\vartheta x^{p(z)-1}, & \hbox{ if }\overline{u}_\lambda(z)\leq x\leq u_\eta(z) \\
                     \lambda u_\eta(z)^{q(z)-1}+f(z,u_\eta(z))+\vartheta u_\eta(z)^{p(z)-1}, & \hbox{ if } u_\eta(z)<x.
                   \end{array}
                 \right.
\end{equation}

We set $K_\lambda(z,x)=\displaystyle{\int_0^x k_\lambda(z,s)ds}$ and consider the $C^1$-functional $\tau_\lambda: W^{1,p(z)}_0(\Omega)\to\RR$ defined by
$$
\tau_\lambda(u)=\int_\Omega \frac{1}{p(z)}|Du|^{p(z)}dz+\int_\Omega\frac{\vartheta+\xi(z)}{p(z)}|u|^{p(z)}dz-\int_\Omega K_\lambda(z,u)dz
$$
for all $u\in W^{1,p(z)}_0(\Omega)$.

From \eqref{eq62} and since $\vartheta>\|\xi\|_\infty$, we infer that $\tau_\lambda(\cdot)$ is coercive. Also it is sequentially weakly lower semicontinuous. So, we can find $\tilde{u}_0\in W^{1,p(z)}_0(\Omega)$ such that
\begin{eqnarray*} 
   && \tau_\lambda(\tilde{u}_0)=\min\{\tau_\lambda(u):\:u\in W^{1,p(z)}_0(\Omega)\}, \\
   &\Rightarrow& \tau'_\lambda(\tilde{u}_0)=0, \\
   &\Rightarrow& \langle\tau'_\lambda(\tilde{u}_0),h\rangle=0 \mbox{ for all }h\in W^{1,p(z)}_0(\Omega).
\end{eqnarray*}

Choosing $h=(\overline{u}_\lambda-\tilde{u}_0)^+$ and $h=(\tilde{u}_0-u_\eta)^+$ and using \eqref{eq62}, we show as before that
$$
\tilde{u}_0\in[\overline{u}_\lambda,u_\eta]\cap{\rm int}\,C_+.
$$

Therefore we may assume that $\tilde{u}_0=u_0$ or otherwise we already have a second positive smooth solution and so we are done.

Next, we consider the Carath\'eodory function
\begin{equation}\label{eq63}
  \hat{k}_\lambda(z,x)=\left\{
                         \begin{array}{ll}
                           \lambda \overline{u}_\lambda(z)^{q(z)-1}+f(z,\overline{u}_\lambda(z))+\vartheta \overline{u}_\lambda(z)^{p(z)-1}, & \hbox{ if } x\leq\overline{u}_\lambda(z) \\
                           \lambda x^{q(z)-1}+f(z,x)+\vartheta x^{p(z)-1}, & \hbox{ if }\overline{u}_\lambda(z)<x.
                         \end{array}
                       \right.
\end{equation}

We define $\hat{K}_\lambda(z,x)=\displaystyle{\int_0^x \hat{k}_\lambda(z,s)ds}$ and introduce the $C^1$-functional $\hat{\tau}_\lambda:W^{1,p(z)}_0(\Omega)\to\RR$ defined by
$$
\hat{\tau}_\lambda(u)=\int_\Omega \frac{1}{p(z)}|Du|^{p(z)}dz+\int_\Omega\frac{\vartheta+\xi(z)}{p(z)}|u|^{p(z)}dz-\int_\Omega \hat{K}_\lambda(z,u)dz
$$
for all $u\in W^{1,p(z)}_0(\Omega)$.

From \eqref{eq62} and \eqref{eq63} it is clear that
$$
\tau_\lambda|_{[\overline{u}_\lambda,u_\eta]}=\hat{\tau}_\lambda|_{[\overline{u}_\lambda,u_\eta]}.
$$

On account of \eqref{eq61}, we have that
\begin{eqnarray}\nonumber 
  && u_0 \mbox{ is a local $C_0^1(\overline{\Omega})$-minimizer of $\hat{\tau}_\lambda$}, \\
  &\Rightarrow& u_0\mbox{ is a local $W^{1,p(z)}_0(\Omega)$-minimizer of $\hat{\tau}_\lambda$. }\label{eq64}\\ \nonumber
   && \mbox{ (see Gasinski and Papageorgiou \cite[Proposition 3.3]{9Gas-Pap}). }
\end{eqnarray}

Using \eqref{eq63}, we can easily see that
\begin{equation}\label{eq65}
  K_{\hat{\tau}_\lambda}\subseteq[\overline{u}_\lambda)\cap{\rm int}\,C_+.
\end{equation}

Then from \eqref{eq63} and \eqref{eq65} we infer that we may assume that $K_{\hat{\tau}_\lambda}$ is finite or otherwise we already have an infinity of positive smooth solutions all distinct from $u_0$ and so, we are done. According to Theorem 5.7.6 of Papageorgiou, R\u adulescu and Repov\v s \cite[p. 449]{17Pap-Rad-Rep}, we can find $\rho\in(0,1)$ small such that
\begin{equation}\label{eq66}
  \hat{\tau}_\lambda(u_0)<\inf\left[\hat{\tau}_\lambda(u):\:\|u-u_0\|=\rho\right]=\hat{m}_\rho.
\end{equation}

On account of hypothesis $H_3(ii)=H_1(ii)$, for $u\in {\rm int}\,C_+$, we have
\begin{equation}\label{eq67}
  \hat{\tau}_\lambda(tu)\to-\infty \mbox{ as }t\to+\infty.
\end{equation}

Finally, from \eqref{eq63} it follows that
\begin{eqnarray}\nonumber 
  && \hat{\varphi}_\lambda|_{[\overline{u}_\lambda)}=\hat{\tau}_\lambda|_{[\overline{u}_\lambda)}+\hat{\eta} \mbox{ with }\hat{\eta}\in\RR, \\
  &\Rightarrow& \hat{\tau}_\lambda(\cdot) \mbox{ satisfies the $C$-condition (see Proposition \ref{prop6}).} \label{eq68}
\end{eqnarray}

Then \eqref{eq66}, \eqref{eq67}, \eqref{eq68} permit the use of the mountain pass theorem. So, we can find $\hat{u}\in W^{1,p(z)}_0(\Omega)$ such that
\begin{equation}\label{eq69}
  \hat{u}\in K_{\hat{\tau}_\lambda}\subseteq[\overline{u}_\lambda)\cap{\rm int}\,C_+ \mbox{ and }\hat{m}_\rho\leq\hat{\tau}_\lambda(\hat{u}).
\end{equation}

From \eqref{eq69} and \eqref{eq63} we see that $\hat{u}\in S_\lambda\subseteq {\rm int}\,C_+$, while from \eqref{eq69} and \eqref{eq66} we have that $\hat{u}\not=u_0$.
\end{proof}

Finally, we show that the critical parameter value $\lambda^*$ is admissible, that is, $\lambda^*\in\mathcal{L}$.

\begin{prop}\label{prop15}
  If hypotheses $H_0$, $H_1$ hold, then $\lambda^*\in\mathcal{L}$.
\end{prop}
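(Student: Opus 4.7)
The plan is to construct a positive solution at $\lambda^*$ by passing to the limit in a sequence of positive solutions at parameters approaching $\lambda^*$ from below. First I choose a strictly increasing sequence $\{\lambda_n\}_{n \geq 1} \subseteq \mathcal{L}$ with $\lambda_n \to \lambda^*$; this exists by \eqref{eq57}. For each $n$, let $u_n \in S_{\lambda_n} \subseteq {\rm int}\,C_+$ be a positive solution---either the minimal solution $u_{\lambda_n}^*$ of Proposition \ref{prop12}, or a suitable mountain-pass solution whose energy can be controlled uniformly---so that
\begin{equation*}
\langle A(u_n), h\rangle + \int_\Omega \xi(z) u_n^{p(z)-1} h\, dz = \lambda_n \int_\Omega u_n^{q(z)-1} h\, dz + \int_\Omega f(z, u_n) h\, dz
\end{equation*}
for all $h \in W^{1,p(z)}_0(\Omega)$. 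By Proposition \ref{prop11} and Remark \ref{rem4} we obtain the uniform lower bound $u_n \geq \overline{u}_{\lambda_n} \geq \overline{u}_{\lambda_1} \in {\rm int}\,C_+$.

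The main obstacle is to prove boundedness of $\{u_n\}_{n \geq 1}$ in $W^{1,p(z)}_0(\Omega)$. I would argue by contradiction along the lines of the $C$-condition proof of Proposition \ref{prop6}: assuming $\|u_n\| \to \infty$, set $y_n = u_n/\|u_n\|$ and pass to a weak limit $y \geq 0$. On the set $\{y > 0\}$ (if of positive measure), the superlinearity $H_1(ii)$ and Fatou's lemma applied to the test-function identity obtained from the equation with $h = u_n$ (divided by $\|u_n\|^{p_+}$) yield a contradiction in the spirit of \eqref{eq19}--\eqref{eq20}. If instead $y \equiv 0$, one works with the cutoff maximizer $t_n \in [0,1]$ of $t \mapsto \hat{\varphi}_{\lambda_n}(tu_n)$, applies the global quasi-monotonicity \eqref{eq8} of $e(z,\cdot)$ as in \eqref{eq21}--\eqref{eq31}, and reaches a contradiction against a uniform upper bound on $\hat{\varphi}_{\lambda_n}(u_n)$. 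Such a uniform upper bound is secured by the inequality $\hat{\varphi}_{\lambda}(u) \leq \hat{\varphi}_0(u)$ valid on the positive cone (since the concave term is subtracted), combined with the mountain-pass characterization of $u_n$ along a fixed admissible path $t \mapsto tv$ with $v \in {\rm int}\,C_+$. Since $\lambda_n \leq \lambda^*$, all constants in the argument remain uniform in $n$.

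Once boundedness is known, extract a subsequence with $u_n \rightharpoonup u^*$ in $W^{1,p(z)}_0(\Omega)$ and $u_n \to u^*$ in $L^{r(z)}(\Omega)$. Testing the equation for $u_n$ with $h = u_n - u^*$ and invoking $H_1(i)$ together with H\"older's inequality makes every term other than $\langle A(u_n), u_n - u^* \rangle$ vanish in the limit, so $\limsup_n \langle A(u_n), u_n - u^* \rangle \leq 0$. Proposition \ref{prop3} upgrades the convergence to strong in $W^{1,p(z)}_0(\Omega)$, and passing to the limit in the equation shows that $u^*$ is a weak solution of $(P_{\lambda^*})$.

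It remains to verify $u^* \neq 0$. Along a further subsequence we have $u_n \to u^*$ a.a.\ in $\Omega$, so the lower bound $u_n \geq \overline{u}_{\lambda_1} \in {\rm int}\,C_+$ implies $u^* \geq \overline{u}_{\lambda_1} > 0$ and therefore $u^* \not\equiv 0$. The anisotropic regularity theory of Fan \cite{5Fan} and Fan--Zhao \cite{6Fan-Zha} yields $u^* \in C_+ \setminus \{0\}$, and Zhang's maximum principle \cite{21Zha} places $u^* \in {\rm int}\,C_+$, exactly as in the proof of Proposition \ref{prop10}. Hence $u^* \in S_{\lambda^*}$, so $\lambda^* \in \mathcal{L}$.
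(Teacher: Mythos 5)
Your overall architecture (monotone approximation $\lambda_n\uparrow\lambda^*$, a priori bound, $(S)_+$-upgrade to strong convergence, nontriviality via $u_n\geq\overline{u}_{\lambda_1}$, regularity) coincides with the paper's. The one genuine gap is the uniform upper bound on the energies $\hat{\varphi}_{\lambda_n}(u_n)$, which is indispensable for the boundedness step: the contradiction argument of Proposition \ref{prop6} uses the bound $\hat{\varphi}_{\lambda}(u_n^+)\leq M$ both to derive \eqref{eq15}--\eqref{eq16} and in \eqref{eq27}, so without it the whole "no AR-condition" machinery collapses. Neither of your two proposed sources of solutions delivers this bound. If you take $u_n=u_{\lambda_n}^*$ (the minimal solution of Proposition \ref{prop12}), you have no energy estimate at all. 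If you take a mountain-pass solution and bound its level by $\max_{t}\hat{\varphi}_0(tv)$ along a fixed path, you are presupposing that $\hat{\varphi}_{\lambda_n}$ has the mountain-pass geometry; but \eqref{eq38} was established only for $\lambda\in(0,\lambda_0)$ with $\lambda_0$ a possibly small threshold, and there is no reason for the mountain-pass theorem to apply to $\hat{\varphi}_{\lambda_n}$ when $\lambda_n$ is close to $\lambda^*$. So "a suitable solution whose energy can be controlled uniformly" is exactly the point that needs an argument, and your proposal does not supply one.

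The paper closes this gap by a specific choice: for each $n$ pick $\tilde{\lambda}\in(\lambda_n,\lambda^*)\cap\mathcal{L}$ and take $u_n$ to be the global minimizer of the truncated coercive functional $\Psi_{\lambda_n}$ from the proof of Proposition \ref{prop8}, built over the interval $[0,u_{\tilde{\lambda}}]$. Since $q_+<p_-$ one has $\Psi_{\lambda_n}(u_n)<0=\Psi_{\lambda_n}(0)$, and on $[0,u_{\tilde{\lambda}}]$ the truncation satisfies $\Psi_{\lambda_n}=\hat{\varphi}_{\lambda_n}$, whence $\hat{\varphi}_{\lambda_n}(u_n)<0$ for all $n$. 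This gives the uniform (indeed negative) energy bound for free, after which your boundedness and limit-passage steps go through verbatim. With this substitution your proof is correct and is essentially the paper's.
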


\begin{proof}
  Let $\{\lambda_n\}_{n\geq1}\subseteq\mathcal{L}$ such that $\lambda_n\uparrow \lambda^*$ as $n\to\infty$. From the proof of Proposition \ref{prop8}, we know that we can find $u_n\in S_{\lambda_n}\subseteq{\rm int}\,C_+$ such that
$$
\hat{\varphi}_{\lambda_n}(u_n)<0 \mbox{ for all }n\in\NN.
$$

Also, we have
$$
\hat{\varphi}_{\lambda_n}'(u_n)=0, \mbox{ for all }n\in\NN.
$$

Then as in the proof of Proposition \ref{prop6}, we show that
$$
\{u_n\}_{n\geq1}\subseteq W^{1,p(z)}_0(\Omega) \mbox{ is bounded. }
$$

We may assume that
\begin{equation}\label{eq70}
  u_n\overset{w}{\to}u_* \mbox{ in }W^{1,p(z)}_0(\Omega) \mbox{ and }u_n\to u_* \mbox{ in } L^{r(z)}(\Omega) \mbox{ as }n\to\infty.
\end{equation}

We have
$$
\langle A(u_n),h\rangle+\int_\Omega \xi(z)u_n^{p(z)-1}hdz=\lambda_n\int_\Omega u_n^{q(z)-1}hdz+\int_\Omega f(z,u_n)hdz
$$
for all $h\in W^{1,p(z)}_0(\Omega)$, all $n\in\NN$.

Choosing $h=u_n-u_*$, passing to the limit as $n\to\infty$ and using \eqref{eq70} and Proposition \ref{prop3}, we obtain
$$
u_n\to u_* \mbox{ in } W^{1,p(z)}_0(\Omega).
$$

So, in the limit as $n\to\infty$, we have
$$
\langle A(u_*),h\rangle+\int_\Omega \xi(z)u_*^{p(z)-1}hdz=\lambda^*\int_\Omega u_*^{q(z)-1}hdz+\int_\Omega f(z,u_*)hdz
$$
for all $h\in W^{1,p(z)}_0(\Omega)$.

We have
\begin{eqnarray*} 
   && \overline{u}_{\lambda_1}\leq u_n \mbox{ for all }n\in\NN \\
   && \mbox{ (see the Remark after Proposition \ref{prop11}), }\\
   &\Rightarrow& \overline{u}_{\lambda_1}\leq u_*, \\
   &\Rightarrow& u_*\in S_{\lambda^*}\subseteq{\rm int}\,C_+ \mbox{ and so }\lambda^*\in\mathcal{L}.
\end{eqnarray*}
The proof is now complete.
\end{proof}

According to this proposition, we have
$$
\mathcal{L}=(0,\lambda^*].
$$

Summarizing, we can state the following bifurcation-type result describing in a precise way the set of the positive solutions of problem $(P_\lambda)$ as the parameter $\lambda>0$ varies.

\begin{thm}\label{th1}
  If hypotheses $H_0$, $H_3$ hold, then there exists $\lambda^*>0$ such that
\begin{itemize}
  \item[(a)] for all $\lambda\in(0,\lambda^*)$, problem $(P_\lambda)$ has at least two positive solutions
$$
u_0,\hat{u}\in {\rm int}\,C_+,\;u_0\not=\hat{u};
$$
  \item[(b)] for $\lambda=\lambda^*$, problem $(P_\lambda)$ has at least one positive solution
$$
u_*\in{\rm int}\,C_+;
$$
  \item[(c)] for $\lambda>\lambda^*$, problem $(P_\lambda)$ has no positive solutions;
  \item[(d)] for every $\lambda\in\mathcal{L}=(0,\lambda^*]$, problem $(P_\lambda)$ has a smallest positive solution $u_\lambda^*\in{\rm int}\,C_+$ and the map $\lambda\mapsto u_\lambda^*$ from $\mathcal{L}=(0,\lambda^*]$ into $C_+\setminus\{0\}$ is increasing, that is,
$$
0<\mu\leq\lambda\in\mathcal{L}\Rightarrow u_\lambda^*-u_\mu^*\in C_+\setminus\{0\}.
$$
\end{itemize}
\end{thm}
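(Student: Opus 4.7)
The plan is to assemble the theorem directly from the propositions already established, taking $\lambda^* = \sup \mathcal{L}$ and verifying the four parts in turn. Because nearly all the analytical work has been done, I expect no new difficulties; the proof is essentially a bookkeeping argument that threads together the bifurcation structure.

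For part (a), I would invoke Proposition \ref{prop14} directly: given any $\lambda \in (0,\lambda^*)$, the multiplicity result furnishes two distinct positive solutions $u_0, \hat{u} \in {\rm int}\,C_+$ of $(P_\lambda)$. Part (b) is exactly Proposition \ref{prop15}, which established $\lambda^* \in \mathcal{L}$ and hence the existence of at least one $u_* \in S_{\lambda^*} \subseteq {\rm int}\,C_+$. Part (c) is immediate from the definition of $\lambda^*$ as $\sup \mathcal{L}$: if $\lambda > \lambda^*$ then $\lambda \notin \mathcal{L}$, which by Proposition \ref{prop13} is consistent with $\lambda^* < \infty$. Thus $S_\lambda = \emptyset$ for all $\lambda > \lambda^*$.

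The only part requiring a small additional argument is (d). For each $\lambda \in \mathcal{L} = (0,\lambda^*]$, Proposition \ref{prop12} supplies a minimal element $u_\lambda^* \in S_\lambda \subseteq {\rm int}\,C_+$. To establish monotonicity, I would fix $0 < \mu < \lambda$ in $\mathcal{L}$ and apply Proposition \ref{prop9} to $u_\lambda^* \in S_\lambda$: this yields some $\tilde{u}_\mu \in S_\mu$ with $u_\lambda^* - \tilde{u}_\mu \in {\rm int}\,C_+$. By the minimality of $u_\mu^*$ we have $u_\mu^* \leq \tilde{u}_\mu$, so
$$
u_\lambda^* - u_\mu^* = (u_\lambda^* - \tilde{u}_\mu) + (\tilde{u}_\mu - u_\mu^*) \in {\rm int}\,C_+ + C_+ \subseteq C_+ \setminus \{0\},
$$
which is the required monotonicity (in fact a bit stronger).

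The only point where one must be slightly careful is the hypothesis bookkeeping: Propositions \ref{prop14} and \ref{prop9} use $H_2$ and $H_3$, Propositions \ref{prop12}, \ref{prop15} use $H_1$, and Proposition \ref{prop13} uses $H_2$; since $H_3$ strengthens $H_2$ which in turn strengthens $H_1$, all invocations are legitimate under the blanket assumption $H_0, H_3$ of the theorem. No new obstacle is anticipated, so the argument reduces to this assembly.
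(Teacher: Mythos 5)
Your proposal is correct and matches the paper exactly: the paper offers no separate proof of Theorem \ref{th1}, presenting it as a summary assembled from Propositions \ref{prop9}, \ref{prop12}, \ref{prop13}, \ref{prop14} and \ref{prop15} in precisely the way you describe, and your explicit argument for the monotonicity in (d) (via Proposition \ref{prop9} plus minimality of $u_\mu^*$) supplies the one step the paper leaves implicit. The hypothesis bookkeeping is also right, since $H_3\Rightarrow H_2\Rightarrow H_1$.
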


\medskip
{\bf Acknowledgments.} This research was supported by the Slovenian Research Agency grants
P1-0292, J1-8131, N1-0064, N1-0083, and N1-0114.

\end{document}